\newcommand{\fgen}{{\rm fg}}
\newcommand{\VIC}{\mathbf{VIC}}
\newcommand{\FI}{\mathbf{FI}}
\newcommand{\HF}{\mathrm{HF}}
\author{Steven V Sam}
\address{Department of Mathematics, University of Wisconsin, Madison, WI}
\email{\href{mailto:svs@math.wisc.edu}{svs@math.wisc.edu}}
\urladdr{\url{http://math.wisc.edu/~svs/}}
\thanks{SS was supported by NSF grant DMS-1500069.}
\author{Andrew Snowden}
\address{Department of Mathematics, University of Michigan, Ann Arbor, MI}
\email{\href{mailto:asnowden@umich.edu}{asnowden@umich.edu}}
\urladdr{\url{http://www-personal.umich.edu/~asnowden/}}
\thanks{AS was supported by NSF grants DMS-1303082 and DMS-1453893 and a Sloan Fellowship.}
\title{Some generalizations of Schur functors}
\subjclass[2010]{
15A69, %   	Multilinear algebra, tensor products
20G05%   	Representation theory
}
\begin{document}

\begin{abstract}
The theory of Schur functors provides a powerful and elegant approach to the representation theory of $\GL_n$---at least to the so-called polynomial representations---especially to questions about how the theory varies with $n$. We develop parallel theories that apply to other classical groups and to non-polynomial representations of $\GL_n$. These theories can also be viewed as linear analogs of the theory of $\FI$-modules.
\end{abstract}

\maketitle
%\tableofcontents

\section{Introduction}

The theory of Schur functors is a major achievement of representation theory. One can use these functors to construct the irreducible representations of $\GL_n$---at least those whose highest weight is positive (the so-called ``polynomial'' representations). This point of view is important because it gives a clear picture of how the irreducible representation of $\GL_n$ with highest weight $\lambda$ varies with $n$, which is crucial to understand when studying problems in which $n$ varies. Unfortunately, this theory only applies to $\GL_n$ and not other classical groups, and, as mentioned, only to the polynomial representations of $\GL_n$. The purpose of this paper is to develop parallel theories that apply in other cases.

To explain our results, we focus on the orthogonal case. Let $\cC=\cC_{\bO}$ be the category whose objects are finite dimensional complex vector spaces equipped with non-degenerate symmetric bilinear forms and whose morphisms are linear isometries. An {\bf orthogonal Schur functor} is a functor from $\cC$ to the category of vector spaces that is algebraic, in an appropriate sense. We denote the category of such functors by $\cA=\cA_{\bO}$. The main results of this paper elucidate the structure of this category, and its analogs in other cases.

Let $\cS$ be the category of (classical) Schur functors. Given an object of  $\cS$, one can evaluate on $\bC^{\infty}$ and obtain a polynomial representation of $\GL_{\infty}$. The resulting functor $\cS \to \Rep^{\pol}(\GL)$ is an equivalence of categories. This is a useful perspective since representations of a group are more tangible than functors: it is not difficult to show that $\Rep^{\pol}(\GL)$ is semi-simple and that the simple objects are indexed by partitions; from this, one deduces the structure of $\cS$.

We take a similar approach to study $\cA$. Given an object of $\cA$, one can again evaluate on $\bC^{\infty}$ (equipped with its standard symmetric bilinear form) and obtain a representation of $\bO_{\infty}$. (Technically, we cannot evaluate an object of $\cA$ on $\bC^{\infty}$, but we can evaluate on $\bC^n$ and take the direct limit.) This representation is algebraic in the sense of \cite{infrank}. We thus have a functor
\begin{displaymath}
T \colon \cA \to \Rep(\bO).
\end{displaymath}
There are two important differences from the previous paragraph that complicate our task. First, the category $\Rep(\bO)$ is not semi-simple. And second, the functor $T$ is not an equivalence. To see this, note that all maps in $\cC$ are injections. Given a representation $V$ of $\bO_n$, we can therefore build a functor $F \colon \cC \to \Vec$ such that $F(\bC^n)=V$ and $F(\bC^k)=0$ for $k \ne n$. This $F$ is a non-zero object of $\cA$ but $T(F)=0$.

The first issue was addressed by \cite{infrank}, which describes the algebraic representation theory of $\bO_{\infty}$ quite clearly. (See \cite{koszulcategory,penkovserganova,penkovstyrkas} for similar work.) The real work in this paper goes in to addressing the second issue.

We now summarize our main results. We first show that $T$ induces an equivalence between the Serre quotient $\cA/\cA_{\tors}$ and $\Rep(\bO)$. Here $\cA_{\tors}$ denotes the category of torsion objects in $\cA$; these are (essentially) those functors $F$ for which $F(\bC^n)=0$ for $n \gg 0$, such as the functor $F$ constructed above. We thus see that $\cA$ is more or less built out of two pieces: $\Rep(\bO)$, which we understand from \cite{infrank}, and $\cA_{\tors}$, which is not difficult to understand directly.

We next prove the somewhat technical result that the section functor $S$ (the right adjoint to $T$) and its derived functors have amenable finiteness properties. Using this, we deduce that $\cA$ is locally noetherian. This is the most important finiteness property of $\cA$.

We then go on to prove a few more results on the structure of $\cA$: we classify the indecomposable injectives, compute the Grothendieck group, define a theory of local cohomology, and introduce a Hilbert series and prove a rationality theorem for it.

All of the above work takes place in \S \ref{s:orth}. In \S \ref{s:other}, we explain how the theory works in other situations (e.g., for symplectic groups). We also see in \S \ref{s:other} that we can interpret orthogonal Schur functors as a sort of linear analog of the $\FI$-modules of Church--Ellenberg--Farb \cite{fimodule}. In fact, many of the results in this paper are analogous to results on $\FI$-modules obtained in \cite{fimodule} and \cite{symc1}.

\section{Orthogonal Schur functors} \label{s:orth}

\subsection{First definitions and results}

Let $\cC=\cC_{\bO}$ be the following category. An object is a pair $(V,\omega)$ consisting of a finite dimensional complex vector space $V$ and a non-degenerate symmetric bilinear form $\omega$ on $V$. A morphism $(V,\omega) \to (V',\omega')$ is a linear map $f \colon V \to V'$ such that $\omega'(f(v_1),f(v_2))=\omega(v_1,v_2)$ for all $v_1,v_2 \in V$. This category appears in \cite[(4.4.7)]{infrank} as $T_1$. A {\bf $\cC$-module} is a functor $\cC \to \Vec$. We denote the category of $\cC$-modules by $\Mod_{\cC}$. For $n \ge 0$, we let $T^n$ be the $\cC$-module given by $T^n(V)=V^{\otimes n}$. For a partition $\lambda$, we let $\bS_{\lambda}$ be the $\cC$-module defined by the Schur functor $\bS_{\lambda}$; note that $T^n$ decomposes as a direct sum of $\bS_{\lambda}$'s. A $\cC$-module is {\bf algebraic} if it is a subquotient of a (possibly infinite) direct sum of $T^n$'s. We denote the category of algebraic $\cC$-modules by $\cA=\cA_{\bO}$. We call this the {\bf category of orthogonal Schur functors}. Both $\cA$ and $\Mod_{\cC}$ are Grothendieck abelian categories.

Suppose that $M$ is a $\cC$-module. Then $M_n=M(\bC^n)$ is a representation of the orthogonal group $\bO_n(\bC)$, regarded as a discrete group. (Here we let $\bC^n$ denote the object $(\bC^n,\omega)$ of $\cC$, where $\omega$ is the standard form on $\bC^n$ for which the standard basis vectors $e_i$ are orthonormal.) The standard inclusion $\bC^n \to \bC^{n+1}$ defines a morphism in $\cC$, and thus a linear map $M_n \to M_{n+1}$. This map is clearly $\bO_n(\bC)$-equivariant. Furthermore, the image of the induced map $M_n \to M_{n+m}$ is contained in the space of $H_{n+m,n}$-invariants, where $H_{n+m,n} \subset \bO_{n+m}$ is the subgroup of $g$ such that $gi = i$ where $i \colon \bC^n \to \bC^{n+m}$ is the standard inclusion.

Let $\cB$ be the category of sequences as above: an object of $\cB$ is a pair $(\{M_n\}_{n \ge 0}, \{i_n\}_{n \ge 0})$ where $M_n$ is a representation of $\bO_n(\bC)$ and $i_n \colon M_n \to M_{n+1}$ is an $\bO_n(\bC)$-equivariant linear map such that the image of the induced map $M_n \to M_{n+m}$ is contained in the $H_{n+m,n}$-invariants. Morphisms in $\cB$ are the obvious things. The previous paragraph shows that $M \mapsto \{M(\bC^n)\}$ defines a functor $\Mod_{\cC} \to \cB$.

\begin{proposition} \label{prop:seqeq}
The functor $\Mod_{\cC} \to \cB$ is an equivalence.
\end{proposition}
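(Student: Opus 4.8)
The plan is to construct an explicit inverse functor $\cB \to \Mod_{\cC}$ and verify that the two compositions are naturally isomorphic to the respective identity functors. The key observation is that the category $\cC$ has, up to isomorphism, exactly one object $\bC^n$ of each dimension $n$, and that $\Isom(\bC^n) = \bO_n(\bC)$; moreover every morphism $\bC^n \to \bC^m$ in $\cC$ (which requires $n \le m$) factors as an isometry of $\bC^m$ followed by the standard inclusion $i\colon \bC^n \hookrightarrow \bC^m$. Concretely, if $f\colon \bC^n \to \bC^m$ is a morphism, then $f(\bC^n)$ is a nondegenerate subspace of $\bC^m$ isometric to $\bC^n$; choosing any isometry $g$ of $\bC^m$ carrying the standard copy of $\bC^n$ onto $f(\bC^n)$, we can write $f = g \circ (h \oplus 1) \circ i$ for a suitable $h \in \bO_n(\bC)$. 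Thus a $\cC$-module is determined by the spaces $M_n$, the $\bO_n(\bC)$-actions, and the inclusion maps $M_n \to M_{n+1}$, subject to compatibility constraints --- and these constraints are exactly what is built into the definition of $\cB$.

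First I would spell out the functor $\Phi\colon \cB \to \Mod_{\cC}$. Given $(\{M_n\}, \{i_n\})$, set $\Phi(M)(\bC^n) = M_n$. For a morphism $f\colon \bC^n \to \bC^m$, define $\Phi(M)(f)$ by writing $f = g \circ \iota$ where $\iota\colon \bC^n \to \bC^m$ is the standard inclusion and $g \in \bO_m(\bC)$, then setting $\Phi(M)(f) = g \cdot (i_{m-1} \circ \cdots \circ i_n)$, interpreting the composite $M_n \to M_m$ as the induced map from the definition of $\cB$. The crux is \emph{well-definedness}: if $f = g\iota = g'\iota$, then $g^{-1}g'$ fixes the standard $\bC^n \subseteq \bC^m$ pointwise, i.e.\ $g^{-1}g' \in H_{m,n}$, and the hypothesis that the image of $M_n \to M_m$ lands in the $H_{m,n}$-invariants forces $(g^{-1}g') \cdot (i_{m-1}\cdots i_n) = i_{m-1}\cdots i_n$, hence $g\cdot(i_{m-1}\cdots i_n) = g'\cdot(i_{m-1}\cdots i_n)$. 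One must also check functoriality $\Phi(M)(f_2 f_1) = \Phi(M)(f_2)\Phi(M)(f_1)$, which follows by choosing compatible factorizations and using $\bO_n(\bC)$-equivariance of the $i_n$'s together with the factorization property of morphisms in $\cC$; and $\Phi$ on morphisms of $\cB$ is the evident thing.

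Finally I would check that $\Phi$ is inverse, up to natural isomorphism, to the functor $\Psi\colon \Mod_{\cC} \to \cB$ of the proposition. That $\Psi \circ \Phi \cong \mathrm{id}_{\cB}$ is essentially immediate: both the $\bO_n(\bC)$-action and the map $i_n$ are recovered from $\Phi(M)$ by evaluating on isometries of $\bC^n$ and on the standard inclusion $\bC^n \to \bC^{n+1}$ respectively. For $\Phi \circ \Psi \cong \mathrm{id}_{\Mod_{\cC}}$: for a $\cC$-module $N$, the module $\Phi(\Psi(N))$ has the same value $N(\bC^n)$ on each object, and agrees with $N$ on isometries and on standard inclusions; since every morphism factors through these (and both modules respect the factorization), the identity maps $N(\bC^n) \to \Phi(\Psi(N))(\bC^n)$ assemble into a natural isomorphism. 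The main obstacle, and the only genuinely substantive point, is the well-definedness of $\Phi$ on morphisms --- i.e.\ the claim that the $H_{m,n}$-invariance condition in the definition of $\cB$ exactly compensates for the ambiguity in factoring a morphism of $\cC$ as (isometry)$\circ$(standard inclusion); everything else is bookkeeping. It is worth isolating as a lemma the structural fact that every morphism $\bC^n \to \bC^m$ in $\cC$ has such a factorization, with the fibers of the factorization being $H_{m,n}$-torsors.
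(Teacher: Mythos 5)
Your proposal is correct and follows essentially the same route as the paper: both construct the quasi-inverse $\cB \to \Mod_{\cC}$ by factoring an arbitrary morphism $f\colon \bC^n \to \bC^m$ as an isometry of $\bC^m$ composed with the standard inclusion (via Witt extension), and both observe that the ambiguity in this factorization lies in $H_{m,n}$, so the invariance condition built into $\cB$ makes the assignment well defined. Your write-up is somewhat more explicit about functoriality and the naturality of the two composite isomorphisms, but the substance is the same.
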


\begin{proof}
Without loss of generality, we may replace $\cC$ with its skeletal subcategory on the objects $\bC^n$ for $n \ge 0$. Suppose we are given a sequence $M=(\{M_n\}_{n \ge 0}, \{i_n\}_{n \ge 0})$ that is an object of $\cB$. We define an associated $\cC$-module $F$. On objects, we put $F(\bC^n) = M_n$. Given a morphism $f \colon \bC^n \to \bC^{n+m}$, we can find $g \in \bO_{n+m}$ such that $gf$ is the standard inclusion, and we set $F(f) = g^{-1} i_{n+m-1} \cdots i_{n+1} i_n$. Furthermore, given any other $g' \in \bO_{n+m}$ with the same property, $g^{-1}g' \in H_{n+m,n}$, so $F(f)$ is independent of the particular choice of $g$. The construction $M \mapsto F$ defines a functor $\cB \to \Mod_\cC$ that is quasi-inverse to $\Mod_{\cC} \to \cB$.
\end{proof}

There is a notion of finite generation for $\cC$-modules. Let $M$ be a $\cC$-module, and suppose that we have elements $\{x_i\}_{i \in I}$ where $x_i \in M(V_i)$. The submodule {\bf generated} by the $x_i$ is the smallest $\cC$-submodule of $M$ containing the $x_i$. We say that $M$ is {\bf finitely generated} if it can be generated by a finite set of elements. We note that if $M$ is finitely generated and algebraic then $M(\bC^n)$ is a finite dimensional algebraic representation of $\bO_n$ for all $n$.

Let $M$ be a $\cC$-module. An element $x \in M(\bC^n)$ is {\bf torsion} if $f_*(x)=0$ for some morphism $f \colon \bC^n \to \bC^m$ in $\cC$. The module $M$ is {\bf torsion} if all of its elements are. We let $\cA_{\tors}$ be the category of torsion algebraic $\cC$-modules. We note that a finitely generated torsion object of $\cA_{\tors}$ has finite length.

\subsection{Algebraic representations of $\bO_{\infty}$} \label{ss:repO}

Let $\bO_{\infty}(\bC)=\bigcup_{n \ge 1} \bO_n(\bC)$. Let $\Rep(\bO_\infty(\bC))$ be the category of all representations of the discrete group $\bO_\infty(\bC)$. Let $T^n_{\infty}=(\bC^{\infty})^{\otimes n}$, which is naturally a representation of $\bO_{\infty}(\bC)$. We say that a representation of $\bO_{\infty}(\bC)$ is {\bf algebraic} if it appears as a subquotient of a direct sum of $T^n_{\infty}$'s. We write $\Rep(\bO)$ for the category of algebraic representations of $\bO_{\infty}$, and $\Rep(\bO_n)$ for the category of algebraic representations of $\bO_n$. The following theorem summarizes some of the main results from \cite{infrank} (see \cite{koszulcategory, penkovserganova, penkovstyrkas} for similar results):

\begin{theorem} \label{thm:repO}
The following statements hold in $\Rep(\bO)$:
\begin{enumerate}[\rm (a)]
\item The objects $T^n_{\infty}$ have finite length.
\item The objects $T^n_{\infty}$ are injective, and every object of $\Rep(\bO)$ embeds into a direct sum of $T^n_{\infty}$'s.
\item The indecomposable injectives are exactly the Schur functors $\bS_{\lambda}(\bC^{\infty})$.
\item Let $L^{\lambda}_{\infty}$ be the socle of $\bS_{\lambda}(\bC^{\infty})$. Then the $L^{\lambda}_{\infty}$'s are exactly the simple objects.
\item Every finite length object of $\Rep(\bO)$ has finite injective dimension.
\end{enumerate}
\end{theorem}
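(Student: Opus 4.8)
These are the principal structural results of \cite{infrank}, so strictly the ``proof'' is a citation; what follows is the line of argument I would run to reprove them. The foundation is a computation of the morphism spaces among the $T^n_\infty$ by classical invariant theory, carried out in the limit $n\to\infty$. The decisive feature is an asymmetry: the form on $\bC^\infty$ supplies an $\bO_\infty$-invariant \emph{functional} $\bC^\infty\otimes\bC^\infty\to\bC$ (a ``cap''), but, $\bC^\infty$ not being reflexive, there is no nonzero $\bO_\infty$-invariant \emph{vector} in $\bC^\infty\otimes\bC^\infty$ (no ``cup''). Hence $\mathrm{Hom}_{\bO_\infty}(T^n_\infty,T^m_\infty)$ is spanned by Brauer-type diagrams built only from permutations and caps: it vanishes for $m>n$, and for $m=n$ it reduces to $\bC[S_n]$, exactly as for $\GL_\infty$. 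Consequently $T^n_\infty=\bigoplus_{\lambda\vdash n}\bS_\lambda(\bC^\infty)^{\oplus\dim M_\lambda}$ (with $M_\lambda$ the irreducible $S_n$-representation), each $\bS_\lambda(\bC^\infty)$ has endomorphism ring $\bC$, and $\mathrm{Hom}_{\bO_\infty}(\bS_\lambda(\bC^\infty),\bS_\mu(\bC^\infty))=0$ unless $|\mu|\le|\lambda|$. This triangularity in $|\lambda|$ is what makes $\Rep(\bO)$ behave like a highest weight category.

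Granting this, the one remaining genuine input is a comparison with the finite rank groups: there are exact specialization functors $\Rep(\bO)\to\Rep(\bO_n)$ (sending $\bC^\infty\mapsto\bC^n$, hence $T^k_\infty\mapsto(\bC^n)^{\otimes k}$) that become faithful on any given object once $n$ is large relative to it, a statement that is itself part of \cite{infrank}. Since $\bO_n(\bC)$ is reductive, $\Rep(\bO_n)$ is semisimple, so a finite dimensional image bounds the length upstairs, giving (a). Moreover each specialization functor has a right adjoint, which therefore preserves injectives; since every object of the semisimple category $\Rep(\bO_n)$ is injective and one checks that each $T^n_\infty$ is a summand of the value of such a right adjoint on an object of $\Rep(\bO_n)$, the $T^n_\infty$ are injective, and the embedding statement in (b) is then just the definition of ``algebraic'' reread as an embedding into injectives. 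For (c): by (b) an indecomposable injective is a summand of a direct sum of $T^n_\infty$'s, hence (by (a) and Krull--Schmidt) of a single $T^n_\infty$, hence some $\bS_\lambda(\bC^\infty)$, and $\bS_\lambda(\bC^\infty)$ is indecomposable since its endomorphism ring is $\bC$. For (d): $\bS_\lambda(\bC^\infty)$, being an indecomposable injective of finite positive length, has nonzero simple socle --- this is $L^\lambda_\infty$ by definition --- and is its injective hull; any simple object embeds into some $T^n_\infty$, hence into some $\bS_\lambda(\bC^\infty)$, hence equals $L^\lambda_\infty$; and distinct $\lambda$ give nonisomorphic $\bS_\lambda(\bC^\infty)$ (by the $\mathrm{Hom}$ computation), hence nonisomorphic injective hulls, hence distinct simples. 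For (e): the finite rank decomposition of $\bS_\lambda(\bC^n)$ shows the only composition factor of $\bS_\lambda(\bC^\infty)$ of degree $|\lambda|$ is $L^\lambda_\infty$, with multiplicity one, so the short exact sequence $0\to L^\lambda_\infty\to\bS_\lambda(\bC^\infty)\to Q_\lambda\to 0$ has $Q_\lambda$ of finite length with all composition factors of strictly smaller degree; since $L^{\emptyset}_\infty=\bC=T^0_\infty$ and $L^{(1)}_\infty=\bC^\infty=T^1_\infty$ are injective, induction on $|\lambda|$ shows every simple object has finite injective dimension, and then filtering an arbitrary finite length object by its simple subquotients finishes.

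The main obstacle is part (b), the injectivity of the $T^n_\infty$: once (a) and (b) are available, parts (c), (d), (e) are essentially formal, and (a) is comparatively soft. One cannot establish injectivity while staying inside $\Rep(\bO)$; one must leave the category. The route sketched above goes through the semisimple categories $\Rep(\bO_n)$, so the real work is constructing the specialization functors, proving they are exact, producing and identifying their right adjoints, pinning down the range of faithfulness, and recognizing each $T^n_\infty$ as a summand of a value of such an adjoint. An alternative route is to observe that restriction along $\bO_\infty\subset\GL_\infty$ carries the (semisimple, hence injective) polynomial representation $\bS_\lambda(\bC^\infty)$ to an injective object of $\Rep(\bO)$, because restriction is right adjoint to induction and induction is exact, $\GL_\infty/\bO_\infty$ being (modeled on) an affine variety; or, in the language of twisted commutative algebras, $\Rep(\bO)$ is the module category of $\mathrm{Sym}(\mathrm{Sym}^2)$ and the point is that its free modules are injective. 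These are the same difficulty in different clothing, and getting any one of them right is the heart of the matter.
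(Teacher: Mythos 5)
The paper's own ``proof'' of this theorem is a bare list of citations to \cite{infrank} (with (b) deduced from (c)), so your proposal agrees with the paper on the essential point: these are imported results, not proved here. Your supplementary reconstruction is largely faithful to how \cite{infrank} actually argues --- the ``caps but no cups'' computation of $\Hom_{\bO_\infty}(T^n_\infty,T^m_\infty)$, the resulting decomposition of $T^n_\infty$ with multiplicity-free indecomposable summands $\bS_\lambda(\bC^\infty)$ having endomorphism ring $\bC$, and your derivations of (c), (d), (e) from (a), (b) are all correct (modulo the Krull--Schmidt--Azumaya theorem for infinite sums and the stable branching rule, which are fair to invoke at this level).

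One step in your sketch is wrong as stated, and it is worth flagging because this very paper depends on the correction. The specialization functors $\Gamma_n\colon\Rep(\bO)\to\Rep(\bO_n)$ are only \emph{left}-exact, not exact: Theorem~\ref{thm:Gamma}(d) and Proposition~\ref{prop:colimGamma} are entirely about their nonvanishing higher derived functors $\rR^i\Gamma_n$, and the local cohomology theory of \S\ref{ss:loccoh} lives off this failure of exactness. So your route to (a) via ``exact specialization functors\dots faithful on any given object'' and your first route to (b) via ``the right adjoint of an exact specialization functor preserves injectives'' do not go through literally; moreover, for (a) the faithfulness would have to be uniform over \emph{all} subquotients of $T^k_\infty$ simultaneously (not just ``on the given object''), which is essentially the content of \cite[Prop.~4.1.5]{infrank} rather than a soft consequence of reductivity. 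Your alternative routes to (b) --- restriction along $\bO_\infty\subset\GL_\infty$ with its exact left adjoint, or the model of $\Rep(\bO)$ as $\Sym(\Sym^2(\bC^\infty))$-modules supported at $0$ --- are the ones that actually work and are closer to what \cite{infrank} does; you correctly identify (b) as the genuine content.
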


\begin{proof}
\begin{enumerate}[label=(\alph*),leftmargin=*]
\item \cite[Proposition 4.1.5]{infrank}
\end{enumerate}
\begin{enumerate}[label=(\alph*),resume,topsep=0pt]
\item Follows from (c) since $T^n_\infty$ is a finite direct sum of Schur functors.
\item \cite[Proposition 4.2.9]{infrank}
\item \cite[Proposition 4.1.4]{infrank}
\item Combine \cite[(2.1.5)]{infrank} and \cite[Theorem 4.2.6]{infrank}. \qedhere
\end{enumerate}
\end{proof}

Let $V$ be a representation of $\bO_{\infty}(\bC)$. Let $G_n$ and $H_n$ be the subgroups of $\bO_{\infty}$ consisting of matrices of the form
\begin{displaymath}
\begin{pmatrix} \ast & 0 \\ 0 & 1 \end{pmatrix}, \qquad
\begin{pmatrix} 1 & 0 \\ 0 & \ast \end{pmatrix},
\end{displaymath}
where the top left block is $n \times n$ and the bottom right block is $(\infty-n) \times (\infty-n)$. Put $\wt{\Gamma}_n(V) = V^{H_n}$. Since $G_n \cong \bO_n(\bC)$ commutes with $H_n$, we see that $\wt{\Gamma}_n(V)$ is a representation of $\bO_n(\bC)$. We let $\Gamma_n$ be the restriction of $\wt{\Gamma}_n$ to $\Rep(\bO)$. It follows from Theorem~\ref{thm:repO}(b) and Theorem~\ref{thm:Gamma}(b) below that $\Gamma_n$ takes values in $\Rep(\bO_n)$. We thus have a left-exact functor
\begin{displaymath}
\Gamma_n \colon \Rep(\bO) \to \Rep(\bO_n),
\end{displaymath}
called the {\bf specialization functor}. The following theorem summarizes some of its main properties:

\begin{theorem} \label{thm:Gamma}
We have the following:
\begin{enumerate}[\rm \indent (a)]
\item $\Gamma_n$ is a left-exact symmetric monoidal functor.
\item $\Gamma_n(T^r_{\infty})=(\bC^n)^{\otimes r}$ and $\Gamma_n(\bS_{\lambda}(\bC^{\infty}))=\bS_{\lambda}(\bC^n)$.
\item $\Gamma_n(L^{\lambda}_{\infty})$ is the irreducible representation of $\bO_n$ with highest weight $\lambda$ if $n \ge \lambda_1^{\dag}+\lambda_2^{\dag}$, and $0$ otherwise. 
\item Suppose $V \in \Rep(\bO)$ has finite length. Then $\rR^i \Gamma_n(V)$ is finite dimensional for all $i$ and $n$, and vanishes for $i \gg 0$. Furthermore, for $n \gg 0$ we have $\rR^i \Gamma_n(V)=0$ for all $i>0$.
\item There is an explicit combinatorial rule to compute $\rR^i \Gamma_n(L^{\lambda}_{\infty})$. In particular, it is nonzero for at most one value of $i$.
\end{enumerate}
\end{theorem}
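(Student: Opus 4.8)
The plan is to establish the five parts in order, using the elementary parts (a) and (b) to reduce the remaining parts to the structure theory of $\Rep(\bO)$ recorded in Theorem~\ref{thm:repO} together with the finer results of \cite{infrank}. Left-exactness of $\Gamma_n$ is immediate, since $V\mapsto V^{H_n}$ is a functor of invariants. The crux of the rest of (a) and (b) is the identity $\Gamma_n(T^r_\infty)=(\bC^n)^{\otimes r}$: writing $\bC^\infty=\bC^n\oplus W$ with $W$ spanned by $e_{n+1},e_{n+2},\dots$, the group $H_n$ fixes $\bC^n$ pointwise and acts on $W$ as its full orthogonal group, and a permutation-matrix argument shows $(W^{\otimes k})^{H_n}=0$ for $k\ge 1$; expanding $(\bC^n\oplus W)^{\otimes r}$ and taking $H_n$-invariants then leaves exactly the summand $(\bC^n)^{\otimes r}$, carrying the standard action of $G_n\cong\bO_n$. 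Consequently the natural lax structure map $\Gamma_n(V)\otimes\Gamma_n(V')\to\Gamma_n(V\otimes V')$ is an isomorphism whenever $V$ and $V'$ are finite direct sums of $T^r_\infty$'s (which are closed under $\otimes$ since $T^r_\infty\otimes T^s_\infty=T^{r+s}_\infty$). For general $V,V'$ one embeds each into such a sum (Theorem~\ref{thm:repO}(b)), writes $V$ and $V'$ as kernels of maps between finite sums of $T^r_\infty$'s using exactness of $\otimes$, realizes $V\otimes V'$ as the kernel of the induced map, applies the left-exact functor $\Gamma_n$, and invokes exactness of $\otimes$ in $\Rep(\bO_n)$ to identify the outcome with $\Gamma_n(V)\otimes\Gamma_n(V')$; this gives (a) and the first half of (b). For the second half, $\bS_\lambda(\bC^\infty)$ is cut out of $T^{|\lambda|}_\infty$ by a Young symmetrizer in $\bC[S_{|\lambda|}]$, whose action on tensor factors commutes with $\Gamma_n$, so $\Gamma_n(\bS_\lambda(\bC^\infty))=\bS_\lambda(\bC^n)$.

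For (c), recall that $n\ge\lambda_1^\dag+\lambda_2^\dag$ is precisely the condition for the irreducible $\bO_n$-representation $V_{[\lambda]}(\bC^n)$ of highest weight $\lambda$ to exist (outside this range one must invoke the modification rule). Apply the left-exact functor $\Gamma_n$ to the socle sequence $0\to L^\lambda_\infty\to\bS_\lambda(\bC^\infty)\to Q\to 0$; by (b) the middle term becomes $\bS_\lambda(\bC^n)$, so $\Gamma_n(L^\lambda_\infty)$ is a subrepresentation of $\bS_\lambda(\bC^n)$. Combining the composition series of $\bS_\lambda(\bC^\infty)$ in $\Rep(\bO)$ from \cite{infrank} --- whose composition factors are the $L^\mu_\infty$ occurring with the orthogonal Littlewood multiplicities $N^\lambda_\mu$ (so $L^\lambda_\infty$ occurs once, and every other $L^\mu_\infty$ has $|\mu|<|\lambda|$) --- with Littlewood's rule $\bS_\lambda(\bC^n)=\bigoplus_\mu V_{[\mu]}(\bC^n)^{\oplus N^\lambda_\mu}$ for restriction from $\GL_n$ to $\bO_n$ in the stable range, an induction on $|\lambda|$ identifies $\Gamma_n(L^\lambda_\infty)$ with the multiplicity-one constituent $V_{[\lambda]}(\bC^n)$ when $n\ge\lambda_1^\dag+\lambda_2^\dag$, and with $0$ otherwise.

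For (d), Theorem~\ref{thm:repO}(a),(b),(e) furnish a finite injective resolution $0\to V\to I^0\to\cdots\to I^d\to 0$ of any finite-length $V$ with each $I^j$ a \emph{finite} direct sum of Schur functors $\bS_\nu(\bC^\infty)$; these are injective, hence acyclic for the left-exact $\Gamma_n$, so $\rR^i\Gamma_n(V)=H^i(\Gamma_n(I^\bullet))$ is the cohomology of a bounded complex of finite-dimensional $\bO_n$-representations by (b), which yields the finite-dimensionality of $\rR^i\Gamma_n(V)$ and its vanishing for $i>d$. For the last assertion, a dévissage along short exact sequences reduces the problem to showing $\rR^{>0}\Gamma_n(L^\lambda_\infty)=0$ for $n\gg 0$ and each fixed $\lambda$; applying $\Gamma_n$ to the socle sequence above and inducting on $|\lambda|$ (the factors of $Q$ having strictly smaller degree) shows $\rR^{\ge 2}\Gamma_n(L^\lambda_\infty)=0$ for $n\gg 0$ and leaves $\rR^1\Gamma_n(L^\lambda_\infty)=\operatorname{coker}\bigl(\bS_\lambda(\bC^n)\to\Gamma_n(Q)\bigr)$, which vanishes for $n\gg 0$ because $\dim\Gamma_n(Q)$ --- computed as an Euler characteristic from the composition series of $Q$ and part (c), using that $Q$ is $\Gamma_n$-acyclic for $n\gg 0$ by the induction --- equals $\dim\bS_\lambda(\bC^n)-\dim V_{[\lambda]}(\bC^n)$ by Littlewood's rule. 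Finally, (e) amounts to making this explicit: starting from the finite injective resolution of $L^\lambda_\infty$ by finite sums of Schur functors (which one can describe explicitly using \cite{infrank}), one applies $\Gamma_n$ and (b) to obtain an explicit finite complex $0\to\bS_\lambda(\bC^n)\to\bigoplus\bS_\mu(\bC^n)\to\cdots$, decomposes each term into $\bO_n$-irreducibles via Littlewood's rule and the orthogonal modification rule (governed by the inequalities between $n$ and $\mu_1^\dag+\mu_2^\dag$), and reads off the cohomology from the known differentials.

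I expect the main obstacle to be part (e), together with the $n\gg 0$ statement in (d): these are precisely the points where the formal properties in Theorem~\ref{thm:repO} no longer suffice and one must bring in the fine combinatorial structure of $\Rep(\bO)$ established in \cite{infrank} --- the composition series of the $\bS_\lambda(\bC^\infty)$, their explicit injective resolutions, and the orthogonal Littlewood and modification rules.
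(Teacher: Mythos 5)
Your parts (a) and (b) are fine, and in fact more self-contained than the paper, which simply cites \cite[(4.4.4), (4.4.5)]{infrank} for these; the computation $(W^{\otimes k})^{H_n}=0$ and the resolution argument for monoidality both work. The paper's treatment of (c)--(e) is the reverse of yours: it cites \cite[(4.4.6)]{infrank} (the Littlewood complex computation) for (e) and deduces (c) and (d) as consequences, whereas you try to prove (c) first from the composition series of $\bS_\lambda(\bC^\infty)$ and Littlewood's branching rule, and then bootstrap to (d) and (e).

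The problem is that your induction for (c) does not close. Left-exactness of $\Gamma_n$ applied to a composition series only gives \emph{inequalities} of multiplicities. Concretely, writing $m_{\lambda\mu}$ for the multiplicity of $V_{[\mu]}(\bC^n)$ in $\Gamma_n(L^\lambda_\infty)$, the inclusion $\Gamma_n(L^\lambda_\infty)\subseteq\bS_\lambda(\bC^n)$ gives $m_{\lambda\mu}\le N^\lambda_\mu$, and the composition-series bound $\dim\bS_\lambda(\bC^n)\le\sum_\nu N^\lambda_\nu\dim\Gamma_n(L^\nu_\infty)$ together with the inductive hypothesis gives $m_{\lambda\lambda}\ge 1$; combining these pins down only $m_{\lambda\lambda}=1$ and leaves $0\le m_{\lambda\mu}\le N^\lambda_\mu$ for $\mu\ne\lambda$. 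Equivalently, the exact sequence $0\to\Gamma_n(L^\lambda_\infty)\to\bS_\lambda(\bC^n)\to\Gamma_n(Q)$ yields only the lower bound $\dim\Gamma_n(L^\lambda_\infty)\ge\dim V_{[\lambda]}(\bC^n)$; nothing in your argument rules out extra constituents, and the vanishing for $n<\lambda_1^\dag+\lambda_2^\dag$ is not addressed at all (there the modification rule introduces cancellations and the dimension bookkeeping breaks down). Closing this gap requires a genuinely new input --- e.g.\ the identification of $L^\lambda_\infty$ with the traceless part of $\bS_\lambda(\bC^\infty)$ together with Weyl's theorem that the traceless part of $\bS_\lambda(\bC^n)$ is $V_{[\lambda]}(\bC^n)$ or zero, or else the full homology computation of the Littlewood complex in \cite{ssw}. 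Since your argument for the $n\gg 0$ acyclicity in (d) and your sketch of (e) both feed on (c), the gap propagates; the first half of (d) (finite-dimensionality and eventual vanishing in $i$, via a finite injective resolution by finite sums of $\bS_\nu(\bC^\infty)$'s) is, however, correct as it stands.
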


\begin{proof}
\begin{enumerate}[label=(\alph*),leftmargin=*]
\item \cite[(4.4.4), (4.4.5)]{infrank}
\end{enumerate}
\begin{enumerate}[label=(\alph*),resume,topsep=0pt]
\item For $r=1$ of the first statement, this follows from the construction of $\Gamma_n$ in \cite[(4.4.4)]{infrank}, the rest of the statement follows since $\Gamma_n$ is a symmetric monoidal functor.
\item This is a special case of (e), but see also \cite[\S 19.5]{fultonharris}.
\item For $V$ simple this follows from (e). The general case follows from d\'evissage. \item See \cite[(4.4.6)]{infrank}. \qedhere
\end{enumerate}
\end{proof}

We will also require the following property of derived specialization:

\begin{proposition} \label{prop:colimGamma}
The functor $\rR^i \Gamma_n$ commutes with filtered colimits.
\end{proposition}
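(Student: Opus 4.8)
The plan is to prove the statement for every $i$ simultaneously by resolving the \emph{entire} filtered system inside a functor category, so that the only genuinely situation-specific input is the degree-zero assertion that $\Gamma_n$ commutes with filtered colimits. Throughout I use that $\Rep(\bO)$ is a locally noetherian Grothendieck category: every object is a subquotient of a direct sum of the $T^r_\infty$, which have finite length by Theorem~\ref{thm:repO}(a), so every object is the directed union of its finite length (hence noetherian) subobjects. In particular filtered colimits in $\Rep(\bO)$ are exact, and---by the standard characterization of local noetherianity---an arbitrary filtered colimit of injective objects of $\Rep(\bO)$ is again injective.

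First I would treat $i=0$. Filtered colimits in $\Rep(\bO)$ are computed on underlying vector spaces, and $\Gamma_n(V)=V^{H_n}$, so for a filtered diagram $(V_\alpha)$ with colimit $N$ the natural map $\varinjlim_\alpha\Gamma_n(V_\alpha)\to\Gamma_n(N)$ is immediately injective (an invariant element dying in $N$ dies in some $V_\beta$, and the transition maps are $H_n$-equivariant). For surjectivity, take $y\in N^{H_n}$ and a lift $\tilde y\in V_\alpha$; let $U\subseteq V_\alpha$ be the subobject generated by $\tilde y$, which is noetherian since $\Rep(\bO)$ is locally noetherian. Because $H_n\subseteq\bO_\infty$, the $\bO_\infty$-subobject $U$ is in particular $H_n$-stable, so $h\tilde y\in U$ for all $h\in H_n$ and the subobject $Z\subseteq U$ generated by $\{h\tilde y-\tilde y\mid h\in H_n\}$ is noetherian. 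As $Z$ maps to $0$ in $N$, it maps to $0$ already in some $V_\beta$ with $\beta\ge\alpha$, and then the image of $\tilde y$ in $V_\beta$ is $H_n$-invariant and maps to $y$. Hence $\Gamma_n$ commutes with filtered colimits.

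Now I would run the general argument. Let $I$ be a (small) filtered category and $\underline N=(N_\alpha)_{\alpha\in I}$ a diagram in $\Rep(\bO)$ with colimit $N$. The functor category $\Rep(\bO)^I$ is again Grothendieck, so $\underline N$ admits an injective resolution $\underline N\to\underline J^\bullet$. Each evaluation functor $\operatorname{ev}_\alpha\colon\Rep(\bO)^I\to\Rep(\bO)$ has an exact left adjoint (left Kan extension, given by a direct sum indexed by a hom-set of $I$), hence preserves injectives; thus $N_\alpha\to J^\bullet_\alpha$ is an injective resolution in $\Rep(\bO)$ and $H^i(\Gamma_n(J^\bullet_\alpha))=\rR^i\Gamma_n(N_\alpha)$. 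Applying the exact functor $\varinjlim_I$, the complex $M^\bullet:=\varinjlim_I\underline J^\bullet$ is a resolution of $N$, and each term $M^k=\varinjlim_\alpha J^k_\alpha$ is a filtered colimit of injectives of $\Rep(\bO)$, hence injective. Therefore
\[
\rR^i\Gamma_n(N)\cong H^i\bigl(\Gamma_n(M^\bullet)\bigr)\cong H^i\bigl(\varinjlim_I\Gamma_n(\underline J^\bullet)\bigr)\cong\varinjlim_I H^i\bigl(\Gamma_n(J^\bullet_\alpha)\bigr)\cong\varinjlim_I\rR^i\Gamma_n(N_\alpha),
\]
using in the middle that $\Gamma_n$ commutes with filtered colimits and that $\varinjlim_I$ is exact; one checks the composite isomorphism is the natural comparison map.

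The hard part is the $i=0$ statement. Taking invariants under the infinite group $H_n$ does not commute with filtered colimits in general; what rescues the situation is that $H_n$ is a subgroup of $\bO_\infty$, so that the subobject generated by a lift of an invariant vector is $H_n$-stable and---crucially---noetherian, which is exactly what lets one absorb the ``defect'' $\{h\tilde y-\tilde y\}$ into a single stage of the colimit. Granting that, the passage to $\rR^i\Gamma_n$ is the standard device of resolving the whole diagram inside $\Rep(\bO)^I$ and using that evaluation preserves injectives and that a filtered colimit of injectives is injective in a locally noetherian category.
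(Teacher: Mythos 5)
Your proof is correct, but it takes a genuinely different route from the paper's. The paper reduces everything to the case $n=0$ by factoring $\Gamma_n=\Gamma_0\circ R_n$ through restriction to $H_n\cong\bO_\infty$ (which is exact, colimit-preserving and injective-preserving), identifies $\rR^i\Gamma_0(-)$ with $\Ext^i_{\Rep(\bO)}(\bC,-)$, transports the problem to the category $\Mod_A^0$ of modules over the twisted commutative algebra $A=\Sym(\Sym^2(\bC^\infty))$ supported at $0$, shows that injectives there remain injective in $\Mod_A$, and concludes using the Koszul resolution of $\bC$ by finitely generated projective $A$-modules. You instead prove the degree-zero statement for every $n$ at once by an element-theoretic argument (absorbing the defect $\{h\tilde y-\tilde y\}$ into a single stage of the colimit via a noetherian subobject), and then invoke the general principle that over a locally noetherian Grothendieck category the derived functors of a left-exact, filtered-colimit-preserving functor again commute with filtered colimits, by resolving the whole diagram in $\Rep(\bO)^I$ and using that evaluation preserves injectives and that filtered colimits of injectives are injective. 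Your argument is more axiomatic and self-contained: it needs only local noetherianity of $\Rep(\bO)$, which indeed follows from Theorem~\ref{thm:repO}(a),(b), and it avoids the tca machinery and the noetherianity of $A$ cited from \cite{sym2noeth}. The paper's argument yields extra information as a byproduct---an explicit finite projective resolution computing $\rR^\bullet\Gamma_0$, hence bounded cohomological dimension, and the comparison of injectives in $\Mod_A^0$ versus $\Mod_A$, which is reused conceptually elsewhere. Both proofs are complete; the only steps you leave as ``standard'' (Baer-type criterion for injectivity of filtered colimits of injectives, naturality of the composite isomorphism) are genuinely routine.
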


\begin{proof}
We first treat the case $n=0$, where $\Gamma_n$ is simply the functor that assigns to $V$ the invariant subspace $V^{\bO_{\infty}}$. We thus see that $\Gamma_0(V)=\Hom_{\Rep(\bO)}(\bC, V)$, where $\bC$ is the trivial representation, and so $\rR^i \Gamma_0(V)=\Ext^i_{\Rep(\bO)}(\bC, V)$. In \cite[(4.3.1)]{infrank}, we show that $\Rep(\bO)$ is equivalent to the category $\Mod_A^0$ of modules over the twisted commutative algebra $A=\Sym(\Sym^2(\bC^{\infty}))$ that are supported at~0 (i.e., locally annihilated by a power of the maximal ideal $A_+$). Under this equivalence, $\bC$ corresponds to the module $\bC=A/A_+$. It thus suffices to show that $\Ext^i_{\Mod_A^0}(\bC, -)$ commutes with filtered colimits.

We claim that injective objects of $\Mod_A^0$ remain injective in $\Mod_A$. Suppose that $I$ is a finite length injective object of $\Mod_A^0$, and consider an injection $I \to M$ in $\Mod_A$. Every $A$-module is canonically graded. Let $M_{\ge n}$ be the submodule $\bigoplus_{i \ge n} M_i$ of $M$, and let $M^{\le n}$ be the quotient $M/M_{\ge n}$. The formation of $M^{\le n}$ is clearly exact in $M$. Moreover, $M^{\le n}$ is supported at~0, being annihilated by the $n$th power of $A_+$. Since $I$ has finite length, we have $I^{\le n}=I$ for $n \gg 0$. Thus, for appropriate $n$, we have an injection $I=I^{\le n} \to M^{\le n}$ in $\Mod_A^0$. Since $I$ is injective in $\Mod_A^0$, this injection splits; composing with the canonical surjection $M \to M^{\le n}$ splits the original injection. Thus $I$ is injective in $\Mod_A$. For the general case, simply observe that all injectives of $\Mod_A^0$ are direct sums of finite length injectives (this follows from Theorem~\ref{thm:repO}(c), for instance), and arbitrary direct sums of injective objects of $\Mod_A$ are injective, since this category is locally noetherian \cite[Theorem 1.1]{sym2noeth}.

By the previous paragraph, we have $\Ext^i_{\Mod_A^0}(-,-)=\Ext^i_{\Mod_A}(-,-)$, and so it suffices to show that $\Ext^i_{\Mod_A}(\bC, -)$ commutes with filtered colimits. Let $P_{\bullet} \to \bC$ be the Koszul resolution of $\bC$ as an $A$-module; thus $P_i=A \otimes \lw^i(\Sym^2(\bC^{\infty}))$. Note that each $P_i$ is finitely generated as an $A$-module. We thus see that $\Ext^{\bullet}_{\Mod_A}(\bC, M)$ is computed by the complex $\Hom_A(P_{\bullet}, M)$. Since each $P_{\bullet}$ is finitely presented (being finitely generated and projective), formation of this complex commutes with filtered colimits. The result now follows from the fact that filtered colimits are exact.

We now treat the case of arbitrary $n$. Ignoring the $\bO_n$-structure, the functor $\Gamma_n$ factors as
\begin{displaymath}
\xymatrix{
\Rep(\bO) \ar[r]^{R_n} & \Rep(\bO) \ar[r]^{\Gamma_0} & \Vec, }
\end{displaymath}
where $R_n$ is the functor $R_n(V)=V \vert_{H_n}$ (note that $H_n \cong \bO_{\infty})$. The functor $R_n$ obviously is exact and commutes with colimits. Furthermore, it takes injective objects to injective objects, as can be seen from the explicit description of injectives in Theorem~\ref{thm:repO}(c). We thus see that $\rR^i \Gamma_n = \rR^i \Gamma_0 \circ R_n$, and so the result follows.
\end{proof}

\subsection{The relation between $\cA$ and $\Rep(\bO)$}

We define functors
\begin{displaymath}
\wt{T} \colon \Mod_{\cC} \to \Rep(\bO_{\infty}(\bC)), \qquad
\wt{S} \colon \Rep(\bO_{\infty}(\bC)) \to \Mod_{\cC}
\end{displaymath}
by
\begin{displaymath}
\wt{T}(M)=\varinjlim_{n \to \infty} M(\bC^n), \qquad
\wt{S}(V)=\{\wt{\Gamma}_n(V)\}_{n \ge 0}.
\end{displaymath}
In the definition of $\wt{S}$, the transition maps are the obvious inclusions $\wt{\Gamma}_n(V) \subset \wt{\Gamma}_{n+1}(V)$. It is clear that the image of $\wt{\Gamma}_n(V)$ in $\wt{\Gamma}_{n+m}(V)$ is contained in the $H_{n+m,n}$-invariants. We note that $\wt{T}$ is exact, as direct limits are exact, and $\wt{S}$ is left-exact, as each $\wt{\Gamma}_n$ is. Moreover, $\wt{S}$ commutes with arbitrary filtered colimits. We let $T$ be the restriction of $\wt{T}$ to $\cA$ and $S$ the restriction of $\wt{S}$ to $\Rep(\bO)$.

\begin{theorem} \label{thm:ST}
We have the following:
\begin{enumerate}[\rm \indent (a)]
\item $T(T^n)=T^n_{\infty}$ and $S(T^n_{\infty})=T^n$, and similarly for $\bS_{\lambda}$.
\item $T$ takes values in $\Rep(\bO)$ and $S$ takes values in $\cA$.
\item $(T,S)$ is an adjoint pair and the unit $\id \to ST$ is an isomorphism.
\item We have $(\rR^i S)(V)=\{\rR^i \Gamma_n(V)\}_{n \ge 0}$ with obvious transition maps.
\item $T$ induces an equivalence of categories $\cA/\cA_{\tors} \to \Rep(\bO)$.
\end{enumerate}
\end{theorem}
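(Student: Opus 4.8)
The plan is to deduce the statement from the general principle that an exact functor between abelian categories admitting a fully faithful right adjoint is, up to equivalence, the Serre quotient by its kernel. We already know $T$ is exact (it is the restriction of $\wt T$, which is exact) and, by part~(c), that the counit $TS \to \mathrm{id}_{\Rep(\bO)}$ is an isomorphism, i.e.\ $S$ is fully faithful. So there are two things to do: identify $\ker T$ with $\cA_{\tors}$, and then run the (formal) argument that the induced functor $\bar T \colon \cA/\ker T \to \Rep(\bO)$ is an equivalence.

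First I would check $\ker T = \cA_{\tors}$. For $M \in \cA$ we have $T(M) = \varinjlim_n M(\bC^n)$, the transition maps being induced by the standard inclusions $\bC^n \hookrightarrow \bC^{n+1}$; hence an element $x \in M(\bC^n)$ dies in $T(M)$ if and only if $\iota_*(x) = 0$ for some standard inclusion $\iota \colon \bC^n \to \bC^m$. On the other hand $x$ is torsion if and only if $f_*(x) = 0$ for \emph{some} morphism $f \colon \bC^n \to \bC^m$ of $\cC$. The key point is that over $\bC$, with the forms non-degenerate, Witt's extension theorem lets us write any such $f$ as $g \circ \iota$ with $g \in \bO_m$ and $\iota$ standard; since $g_*$ is invertible, $f_*(x) = 0$ iff $\iota_*(x) = 0$. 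Thus $x$ is torsion iff it dies in $T(M)$, so $M$ is torsion iff $T(M) = 0$. Since $T$ is exact, $\cA_{\tors} = \ker T$ is then a Serre subcategory (in fact localizing), so $\cA/\cA_{\tors}$ is abelian and $T$ factors through an exact functor $\bar T \colon \cA/\cA_{\tors} \to \Rep(\bO)$.

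Then I would establish the equivalence formally. Since the counit $TS \to \mathrm{id}$ is an isomorphism, the triangle identity shows $T(\eta_M)$ is an isomorphism for the unit $\eta_M \colon M \to STM$; exactness of $T$ then forces $\ker \eta_M$ and $\mathrm{coker}\,\eta_M$ into $\ker T = \cA_{\tors}$, so $\eta_M$ becomes invertible in $\cA/\cA_{\tors}$. Also, $T$ exact implies $S$ preserves injectives, so for torsion $P$ and any $V \in \Rep(\bO)$ we get $\Ext^i_{\cA}(P, SV) \cong \Ext^i_{\Rep(\bO)}(TP, V) = 0$; hence each $SV$ is $\cA_{\tors}$-closed, and standard properties of Serre quotients give $\Hom_{\cA/\cA_{\tors}}(-, SV) = \Hom_{\cA}(-, SV)$. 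Putting these together, for $M, N \in \cA$,
\[
\Hom_{\cA/\cA_{\tors}}(M, N) \cong \Hom_{\cA/\cA_{\tors}}(M, STN) \cong \Hom_{\cA}(M, STN) \cong \Hom_{\Rep(\bO)}(TM, TN),
\]
the composite being the map induced by $\bar T$; so $\bar T$ is fully faithful. It is essentially surjective because for every $V \in \Rep(\bO)$ we have $SV \in \cA$ by~(b) and $T(SV) \cong V$ by $TS \cong \mathrm{id}$. Hence $\bar T$ is an equivalence.

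The one step carrying real content is the identification $\ker T = \cA_{\tors}$, where Witt's extension theorem is needed to compare an arbitrary morphism of $\cC$ with the standard inclusions that define the direct limit computing $T$; everything after that is adjunction and Serre-quotient bookkeeping built on the already-established parts~(a)--(c) (chiefly $TS \cong \mathrm{id}$ and the exactness of $T$). Alternatively, one can invoke a general localization theorem for exact functors with fully faithful right adjoints and skip the third paragraph entirely.
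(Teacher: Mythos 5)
Your argument only addresses part (e): you take (a)--(c) as given and never mention (d). Since all five parts constitute the statement, this is a real coverage gap. Parts (a)--(d) are not free: (a) requires identifying $(T^n_{\infty})^{H_k}$ with $(\bC^k)^{\otimes n}$ (via Theorem~\ref{thm:Gamma}(b)); (b) for $S$ uses that every $V \in \Rep(\bO)$ embeds into a sum of $T^n_{\infty}$'s together with left-exactness of $S$ and its commutation with direct sums; (c) is an explicit computation with the adjunction; and (d) is a separate universal $\delta$-functor argument. One small point in your favor: the statement of (c) literally says the \emph{unit} $\id \to ST$ is an isomorphism, which cannot be right (its cokernel is precisely what (e) is about); you correctly read it as the counit $TS \to \id$, which is what the proof of (c) actually establishes.

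For part (e) itself your route is correct and genuinely different from---in fact simpler than---the paper's. The paper proves that $\coker(M \to ST(M))$ is torsion by d\'evissage: it introduces the class $\cX$ of objects with torsion cokernel, checks $T^n \in \cX$ using (a), and closes $\cX$ under subquotients via the snake lemma together with the fact that $\rR^1 S(V)$ is torsion for every $V$, which in turn requires Theorem~\ref{thm:Gamma}(d) and the commutation of $\rR^i\Gamma_n$ with filtered colimits (Proposition~\ref{prop:colimGamma}). Your argument replaces all of this with the triangle identity: the counit being an isomorphism forces $T(\eta_M)$ to be one, and exactness of $T$ then puts $\ker\eta_M$ and $\coker\eta_M$ into $\ker T$, which you correctly identify with $\cA_{\tors}$ via Witt's extension theorem (the same fact the paper uses in Proposition~\ref{prop:seqeq}). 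This avoids any derived-functor input, at the cost of not establishing along the way that $\rR^1 S(V)$ is torsion (the paper reproves this later anyway). Two minor quibbles. First, your third paragraph is redundant: once $\eta$ is invertible in the quotient and $TS \cong \id$, the functors $\ol{T}$ and $\ol{S}$ are already quasi-inverse. Second, within that paragraph the claim $\Ext^i_{\cA}(P, S(V)) \cong \Ext^i_{\Rep(\bO)}(T(P), V)$ is not literally correct for $i \ge 1$, since $S$ is only left exact; the Grothendieck spectral sequence for $\Hom_{\cA}(P,-) \circ S$ gives only an injection $\Ext^1_{\cA}(P, S(V)) \hookrightarrow \Ext^1_{\Rep(\bO)}(T(P), V)$, which is all you need.
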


\begin{proof}
(a) The statement for $T$ is clear. It is clear that $T^n(\bC^k) \subset (T^n_{\infty})^{H_k}$ for all $k$, and the these inclusions are compatible with transition maps. By Theorem~\ref{thm:Gamma}(b), these inclusions are equalities. Thus $S(T_n^{\infty})=T_n$.

(b) Since $T$ is exact and takes $T_n$ to $T^n_{\infty}$, it takes subquotients of sums of $T^n$'s to subquotients of sums of $T^n_{\infty}$'s. Let $V$ be an algebraic representation of $\Rep(\bO)$. By Theorem~\ref{thm:repO}(b), $V$ embeds into a direct sum of $T^n_{\infty}$'s. Since $S$ is left-exact and commutes with arbitrary direct sums, we see that $V$ embeds into a direct sum of $T^n$'s and is therefore algebraic.

(c) Let $M$ be an object of $\cA$ and $V$ a representation of $\bO_{\infty}(\bC)$. Suppose that $f \colon T(M) \to V$ is a $\bO_{\infty}(\bC)$-equivariant map. There is a canonical $\bO_n(\bC)$-equivariant map $M_n=M(\bC^n) \to T(M)$, and so composing with $f$ induces a map $f_n \colon M_n \to V$ that is $\bO_n(\bC)$-equivariant. Since $M_n$ maps into the $H_n$ invariants of $T(M)$, we see that $f_n$ maps into $S(V)_n=V^{H_n}$. It is clear that the maps $f_n$ are compatible with transition maps, and so yield a map of $\cC$-modules $M \to S(V)$.

Conversely, suppose that $g \colon M \to S(V)$ is a map of $\cC$-modules. Applying $T$, we obtain a map $T(M) \to T(S(V))$. Since $V$ embeds into a sum of $T^n$'s, every element of $V$ is invariant under $H_n$ for $n$ sufficiently large, and so $T(S(V))=V$. We thus have a map $T(M) \to V$ of $\bO_{\infty}(\bC)$-representations. One easily checks that this construction is inverse to the one in the previous paragraph.

(d) Let $F^i(V)=\{\rR^i \Gamma_n(V)\}_{n \ge 0}$. This is a well-defined $\cC$-module for the same reason $S$ is well-defined. Both $F^{\bullet}$ and $\rR^{\bullet} S$ are cohomological $\delta$-functors. The latter is universal, by definition. However, the former is also universal, since $F^i(V)=0$ for $i>0$ if $V$ is injective. Thus the two are isomorphic.

(e) Since $T$ is exact and kills $\cA_{\tors}$, it induces an functor $\ol{T} \colon \cA/\cA_{\tors} \to \Rep(\bO)$, by the universal property of Serre quotient categories. Let $\ol{S} \colon \Rep(\bO) \to \cA/\cA_{\tors}$ be the composition of $S$ with the localization functor $\cA \to \cA/\cA_{\tors}$. From (c), we have that $\ol{S} \ol{T} = \id$. We now show that the canonical map $\id \to \ol{S} \ol{T}$ is an isomorphism. For this, it is enough to show that for every $M \in \cA$ the kernel and cokernel of the canonical map $M \to S(T(M))$ is torsion. This is clear for the kernel; we now prove it for the cokernel.

Let $\cX$ be the class of objects $M \in \cA$ for which the cokernel of $M \to S(T(M))$ is torsion. Then $\cX$ contains that objects $T^n$ by (a), and therefore all direct sums of such objects since $S$ and $T$ commute with direct sums. It is therefore enough to show that any subquotient of an object in $\cX$ again belongs to $\cX$. Thus suppose we have an exact sequence
\begin{displaymath}
0 \to M_1 \to M_2 \to M_3 \to 0
\end{displaymath}
in $\cA$ with $M_2 \in \cX$. Applying $ST$, we obtain a commutative diagram
\begin{displaymath}
\xymatrix{
0 \ar[r] & M_1 \ar[r] \ar[d]^{f_1} & M_2 \ar[r] \ar[d]^{f_2} & M_3 \ar[r] \ar[d]^{f_3} & 0 \\
0 \ar[r] & ST(M_1) \ar[r] & ST(M_2) \ar[r] & ST(M_3) \ar[r] & \rR^1ST(M_1) }
\end{displaymath}
Let $K_i=\ker(f_i)$ and $C_i=\coker(f_i)$. The snake lemma gives an exact sequence
\begin{displaymath}
0 \to K_1 \to K_2 \to K_3 \to C_1 \to C_2 \to C_3 \to \rR^1ST(M_1)
\end{displaymath}
We have already explained that the $K_i$'s are torsion. Furthermore, $C_2$ is torsion by assumption. Thus $C_1$ is torsion, and so $M_1 \in \cX$. To show that $C_3$ is torsion (and thus obtain $M_3 \in \cX$), it suffices to show that $\rR^1ST(M_1)$ is torsion.

We show, generally, that $\rR^1S(V)$ is torsion for any $V \in \Rep(\bO)$. First suppose that $V$ has finite length. Then $\rR^1S(V)_n=0$ for $n \gg 0$ by Theorem~\ref{thm:Gamma}(d), and so $\rR^1S(V)$ is torsion. Every object of $\Rep(\bO)$ is the union of its finite length subobjects. Since $\rR^1S$ commutes with filtered colimits by Proposition~\ref{prop:colimGamma} and any colimit of torsion modules is torsion, the result follows.
\end{proof}

\begin{corollary} \label{cor:Tinj}
The objects $T^n$ and $\bS_{\lambda}$ are injective in $\cA$.
\end{corollary}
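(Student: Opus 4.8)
The plan is to deduce this formally from the adjunction $(T,S)$ of Theorem~\ref{thm:ST}. The key input is the standard categorical fact that the right adjoint of an exact functor preserves injective objects: if $I$ is injective in $\Rep(\bO)$, then $\Hom_{\cA}(-,S(I)) \cong \Hom_{\Rep(\bO)}(T(-),I)$ is a composite of the exact functor $T$ with the exact contravariant functor $\Hom_{\Rep(\bO)}(-,I)$, hence is exact, and this is precisely the assertion that $S(I)$ is injective in $\cA$.

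So the steps are as follows. First, record that $T$ is exact: it is the restriction of $\wt{T}$, which is a filtered colimit of evaluation functors and is therefore exact (filtered colimits of vector spaces are exact), as already noted just before Theorem~\ref{thm:ST}. Second, by Theorem~\ref{thm:repO}(b),(c) the objects $T^n_{\infty}$ and $\bS_{\lambda}(\bC^{\infty})$ are injective in $\Rep(\bO)$. Third, apply the fact above to conclude that $S(T^n_{\infty})$ and $S(\bS_{\lambda}(\bC^{\infty}))$ are injective in $\cA$ (that these objects lie in $\cA$ at all is part of Theorem~\ref{thm:ST}(b)). Finally, invoke Theorem~\ref{thm:ST}(a) to identify $S(T^n_{\infty}) = T^n$ and $S(\bS_{\lambda}(\bC^{\infty})) = \bS_{\lambda}$, which finishes the argument.

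I do not anticipate a genuine obstacle here: the corollary is a purely formal consequence of the adjunction, the exactness of $T$, and the injectivity of $T^n_{\infty}$ and $\bS_{\lambda}(\bC^{\infty})$ in $\Rep(\bO)$, all of which are already available. The only point requiring a moment's care is that $T$ is genuinely exact (not merely left- or right-exact), but this is immediate from exactness of filtered colimits of vector spaces.
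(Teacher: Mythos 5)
Your proof is correct and follows essentially the same route as the paper: $T$ is exact, so its right adjoint $S$ preserves injectives, and $T^n = S(T^n_\infty)$ with $T^n_\infty$ injective. The only cosmetic difference is that the paper deduces injectivity of $\bS_\lambda$ by noting it is a direct summand of $T^n$, whereas you apply $S$ directly to the injective object $\bS_\lambda(\bC^\infty)$; both are valid.
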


\begin{proof}
Since $T$ is exact, its right adjoint $S$ carries injectives to injectives. Since $T^n_{\infty}$ is injective in $\Rep(\bO)$ (Theorem~\ref{thm:repO}(b)) and $T^n=S(T^n_{\infty})$, we see that $T^n$ is injective. Since $\bS_{\lambda}$ is a summand of $T^n$, it too is injective.
\end{proof}

\begin{corollary}
The category $\cA$ has Krull--Gabriel dimension $1$.
\end{corollary}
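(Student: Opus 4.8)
The plan is to bound the Krull--Gabriel dimension of $\cA$ from below and from above, the upper bound being the substantive part. Since $\cA$ is locally noetherian, its Krull--Gabriel dimension is the least $\alpha$ for which every noetherian object lies in $\cA^{(\alpha)}$, where $\cA^{(-1)}=0$, $\cA^{(0)}$ is the Serre subcategory of finite length objects, and $\cA^{(\beta)}$ is the preimage under $\cA\to\cA/\cA^{(<\beta)}$ of the finite length objects. The lower bound is immediate, since $\cA$ is not locally finite: the unit object $T^0$ (the constant functor $V\mapsto\bC$) is generated by a single element, hence noetherian, but it is not of finite length. Indeed, for each $k\ge0$ the rule $N_k(\bC^n)=\bC$ for $n\ge k$ and $N_k(\bC^n)=0$ for $n<k$ defines a subobject $N_k\subseteq T^0$ (it is a subobject because a morphism $\bC^n\to\bC^m$ of $\cC$ exists only when $n\le m$), and $N_0\supsetneq N_1\supsetneq N_2\supsetneq\cdots$ is an infinite strictly decreasing chain. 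So $T^0\notin\cA^{(0)}$, and the Krull--Gabriel dimension is at least $1$.

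For the upper bound I will show that every noetherian $M\in\cA$ lies in $\cA^{(1)}$, i.e.\ that its image $\ol{M}$ in $\cA/\cA^{(0)}$ has finite length. First I reduce to the section functor. A finitely generated $M\in\cA$ is a quotient of a finitely generated submodule $N$ of a direct sum of $T^n$'s, and $N$, being finitely generated, lies in a \emph{finite} sub-sum; as $T$ is exact, $T(N)$ embeds into a finite direct sum of $T^n_{\infty}$'s, which has finite length by Theorem~\ref{thm:repO}(a), so its quotient $T(M)$ has finite length too. By Theorem~\ref{thm:repO}(b) this $T(M)$ embeds into a finite direct sum of $T^n_{\infty}$'s, so, applying the left-exact functor $S$ together with Theorem~\ref{thm:ST}(a), $S(T(M))$ embeds into a finite direct sum of $T^n$'s and is therefore noetherian since $\cA$ is locally noetherian. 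By Theorem~\ref{thm:ST}(c) there is a unit map $M\to ST(M)$, whose kernel and cokernel are torsion by the proof of Theorem~\ref{thm:ST}(e); being subquotients of the noetherian objects $M$ and $ST(M)$, they are finitely generated torsion objects, hence of finite length, hence lie in $\cA^{(0)}$. Thus $\ol{M}\cong\ol{ST(M)}$ in $\cA/\cA^{(0)}$, and it suffices to show that $\ol{S(V)}$ has finite length in $\cA/\cA^{(0)}$ for every finite length $V\in\Rep(\bO)$.

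This I prove by d\'evissage on $V$. Given a short exact sequence $0\to V'\to V\to V''\to0$ in $\Rep(\bO)$ with $V'$ of finite length, Theorem~\ref{thm:ST}(d) and Theorem~\ref{thm:Gamma}(d) show that $\rR^1S(V')(\bC^n)=\rR^1\Gamma_n(V')$ is finite dimensional for all $n$ and vanishes for $n\gg0$, so $\rR^1S(V')$ has finite length; since $S$ is left exact, applying $S$ and then passing to $\cA/\cA^{(0)}$ yields a short exact sequence $0\to\ol{S(V')}\to\ol{S(V)}\to\ol{S(V'')}\to0$. So it is enough to treat $V=L^{\lambda}_{\infty}$. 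For this $V$, Theorem~\ref{thm:Gamma}(c) identifies $S(L^{\lambda}_{\infty})(\bC^n)=\Gamma_n(L^{\lambda}_{\infty})$ with the irreducible $\bO_n$-representation of highest weight $\lambda$ when $n\ge\lambda_1^{\dag}+\lambda_2^{\dag}$ and with $0$ otherwise, and the transition maps are the injective, hence nonzero, inclusions of successive invariant subspaces of $L^{\lambda}_{\infty}$. Using irreducibility and the nonvanishing of the transition maps, one checks that the subobjects of $S(L^{\lambda}_{\infty})$ are exactly $0$ and, for each $k\ge\lambda_1^{\dag}+\lambda_2^{\dag}$, the subobject equal to $\Gamma_n(L^{\lambda}_{\infty})$ for $n\ge k$ and to $0$ for $n<k$; these form a chain whose successive quotients are supported at a single $\bC^n$ and finite dimensional there, hence lie in $\cA^{(0)}$. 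Since this chain is infinite, $S(L^{\lambda}_{\infty})$ is not of finite length, so $\ol{S(L^{\lambda}_{\infty})}$ is a nonzero object of $\cA/\cA^{(0)}$ every subobject of which is $0$ or itself; that is, it is simple. This closes the d\'evissage, gives $\cA^{(1)}=\cA$, and so the Krull--Gabriel dimension equals $1$.

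I expect the main obstacle to be the last two steps of the upper bound: pinning down the subobject lattice of $S(L^{\lambda}_{\infty})$ accurately enough to deduce simplicity in the quotient, and handling the non-exactness of $S$, which is precisely what forces one to use the sharp vanishing of $\rR^1S$ on finite length objects from Theorem~\ref{thm:Gamma}(d). Conceptually, the whole upper-bound argument is the instance, for the localizing subcategory $\cA_{\tors}\subseteq\cA$, of the general inequality $\mathrm{KGdim}(\cA)\le\mathrm{KGdim}(\cA_{\tors})+\mathrm{KGdim}(\cA/\cA_{\tors})+1$; here $\cA_{\tors}$ is locally finite because a finitely generated torsion object has finite length, and $\cA/\cA_{\tors}\simeq\Rep(\bO)$ is locally finite by Theorem~\ref{thm:repO}(a),(b), so the bound reads $0+0+1=1$.
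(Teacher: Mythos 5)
Your proof is correct, and it is essentially the paper's argument written out in full: the paper likewise identifies the Krull--Gabriel dimension $0$ part of $\cA$ with $\cA_{\tors}$ (finitely generated torsion objects have finite length), notes that $\cA/\cA_{\tors}\simeq\Rep(\bO)$ is locally finite by Theorem~\ref{thm:repO}, and concludes by the recursive characterization---exactly the summary you give in your final paragraph. The intermediate steps you verify (the reduction via $M\to ST(M)$, the d\'evissage through $\rR^1S$, and the subobject lattice of $S(L^{\lambda}_{\infty})$) are all sound and correctly fill in what the paper leaves implicit.
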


\begin{proof}
The subcategory of $\cA$ consisting of objects of Krull--Gabriel dimension~0 is $\cA_{\tors}$, and $\cA/\cA_{\tors}=\Rep(\bO)$ has Krull--Gabriel dimension~0 by Theorem~\ref{thm:repO}(a). Thus $\cA$ has Krull--Gabriel dimension~1 by the recursive characterization of this dimension.
\end{proof}

\begin{remark}
It follows from Theorem~\ref{thm:ST}(e) that $S$ induces an equivalence of categories between $\Rep(\bO)$ and its image in $\cA$, which consists of the {\bf saturated} objects in $\cA$. This statement was also made in \cite[\S 1.2.5]{infrank}. While this image is an abelian category, it is not an abelian subcategory of $\cA$.
\end{remark}

\subsection{Finiteness properties of the section functor}

Our goal in this section is to prove the following finiteness theorem:

\begin{theorem} \label{thm:satfin}
Let $V \in \Rep(\bO)$ have finite length.
\begin{enumerate}[\rm \indent (a)]
\item $S(V)$ is a finitely generated object of $\cA$.
\item $\rR^i S(V)$ is a finitely generated torsion object of $\cA$, for all $i \ge 1$.
\end{enumerate}
\end{theorem}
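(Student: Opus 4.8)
The plan is to prove (b) first, where the real content is supplied by Theorem~\ref{thm:Gamma}(d), and then to deduce (a) by induction on the length of $V$.

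For (b): by Theorem~\ref{thm:ST}(b) and (d), $\rR^i S(V)$ lies in $\cA$ and its $n$-th term is $\rR^i\Gamma_n(V)$. Since $V$ has finite length, Theorem~\ref{thm:Gamma}(d) says that each $\rR^i\Gamma_n(V)$ is finite dimensional and, for $i\ge 1$, vanishes for $n\gg 0$. But a $\cC$-module whose terms all vanish in sufficiently high degree is torsion, and if moreover every term is finite dimensional it is generated by the finite-dimensional direct sum of its finitely many nonzero terms, hence finitely generated. This proves (b).

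For the base case of (a), take $V = L^\lambda_\infty$ simple. By Theorem~\ref{thm:Gamma}(c), $\wt{S}(L^\lambda_\infty)_n = \Gamma_n(L^\lambda_\infty)$ vanishes for $n < d := \lambda_1^{\dag}+\lambda_2^{\dag}$ and is an irreducible $\bO_n$-representation for $n \ge d$. I claim $S(L^\lambda_\infty)$ is generated by its degree-$d$ term. By the proof of Proposition~\ref{prop:seqeq}, every transition map of $\wt{S}(V)$ has the form $g\circ(\text{inclusion})$ with $g$ an orthogonal matrix, hence is injective; so for $n>d$ the image of the (nonzero) degree-$d$ term inside the degree-$n$ term is a nonzero subspace, and by irreducibility the $\bO_n$-submodule it generates is everything. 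Thus the $\cC$-submodule of $S(L^\lambda_\infty)$ generated by the (finite-dimensional) degree-$d$ term is all of $S(L^\lambda_\infty)$, so the latter is finitely generated.

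For the inductive step, choose $0\to V'\to V\to V''\to 0$ with $V'$ and $V''$ of strictly smaller length; by hypothesis $S(V')$ and $S(V'')$ are finitely generated. The long exact sequence of $\rR^\bullet S$ yields $0\to S(V')\to S(V)\to K\to 0$ with $K=\operatorname{im}(S(V)\to S(V''))$, so it suffices to show $K$ is finitely generated. Now $K\subseteq S(V'')$ and $S(V'')/K$ embeds into $\rR^1 S(V')$, whose $n$-th term $\rR^1\Gamma_n(V')$ vanishes for $n\gg 0$ by Theorem~\ref{thm:Gamma}(d); hence $K_n = S(V'')_n$ once $n$ exceeds some $N_1$. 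Pick $N\ge N_1$ large enough that $S(V'')$ is generated in degrees $\le N$. Then for $n>N$ we have $K_n = S(V'')_n = \bO_n\cdot(\text{image of }S(V'')_N)$ while $S(V'')_N = K_N$, so $K$ is generated by $\bigoplus_{n\le N}K_n$; this is finite dimensional, each $K_n$ being a subspace of the finite-dimensional space $S(V'')_n$ (finite dimensionality here uses that $S(V'')$ is finitely generated and algebraic). Hence $K$, and so $S(V)$, is finitely generated.

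The one obstacle of substance is that $\cA$ is not yet known to be locally noetherian---indeed this theorem is the key input toward that fact---so one cannot simply argue that $S(V)$ is finitely generated because it embeds into the finitely generated module $S(E)$ for $E$ an injective hull of $V$, since submodules of finitely generated $\cC$-modules need not be finitely generated. The argument above circumvents this by observing that all error terms that occur are of the form $\rR^1 S$ of a finite-length object, which by Theorem~\ref{thm:Gamma}(d) vanishes in high degree; thus $S(V)$ agrees with a finitely generated module in high degree while having finite-dimensional terms in every degree, and that forces finite generation. The only genuinely representation-theoretic ingredient is the irreducibility statement in Theorem~\ref{thm:Gamma}(c), which is what makes the base case run.
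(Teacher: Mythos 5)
Your proof is correct, but the inductive step for (a) is organized differently from the paper's. Part (b) is identical: both arguments read off finite generation and torsion-ness directly from Theorem~\ref{thm:Gamma}(d). For part (a), the paper peels off a \emph{simple quotient} $0 \to W \to V \to L \to 0$ and handles the image of $S(V) \to S(L)$ by a separate lemma (Lemma~\ref{lem:Lfg}) asserting that \emph{every} subobject of $L^\lambda = S(L^\lambda_\infty)$ is finitely generated; that lemma is proved using only torsion-freeness of $L^\lambda$ and the irreducibility statement of Theorem~\ref{thm:Gamma}(c). You instead take an arbitrary short exact sequence, and control the image $K \subseteq S(V'')$ by bounding its cokernel inside $\rR^1 S(V')$, which vanishes in high degree by Theorem~\ref{thm:Gamma}(d); you then show directly that a subobject of a finitely generated algebraic module which agrees with it in all sufficiently high degrees is finitely generated. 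That last step is essentially the lemma the paper proves later, in the Noetherianity section, so your argument front-loads an ingredient the paper defers, and it shifts the representation-theoretic input from Theorem~\ref{thm:Gamma}(c) (used throughout the paper's induction via Lemma~\ref{lem:Lfg}) to Theorem~\ref{thm:Gamma}(d) (used only in your inductive step), with irreducibility appearing only in your base case. Both routes are sound; the paper's is slightly leaner because Lemma~\ref{lem:Lfg} dispenses with any discussion of degrees of generation, while yours has the small virtue of not needing the quotient to be simple. One cosmetic point: you should take $N > N_1$ (or define $N_1$ so that $K_n = S(V'')_n$ for all $n \ge N_1$) to justify the assertion $S(V'')_N = K_N$.
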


We begin with a lemma. Recall that $L_{\infty}^{\lambda}$ is the simple object of $\Rep(\bO)$ indexed by $\lambda$. We let $L^{\lambda}$ be the object of $\cA$ given by $S(L^{\lambda}_{\infty})$.

\begin{lemma} \label{lem:Lfg}
Every subobject of $L^{\lambda}$ is finitely generated.
\end{lemma}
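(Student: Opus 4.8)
The plan is to exploit the fact that $L^\lambda = S(L^\lambda_\infty)$ has essentially no room for complicated subobjects: I will show that every nonzero subobject of $L^\lambda$ is a ``truncation'' of $L^\lambda$, and that each such truncation is cyclic. First I would unwind the $\cC$-module structure of $L^\lambda$. By the definition of $S$ we have $L^\lambda(\bC^n) = \wt\Gamma_n(L^\lambda_\infty) = (L^\lambda_\infty)^{H_n}$, and the transition map $L^\lambda(\bC^n) \to L^\lambda(\bC^{n+1})$ is the tautological inclusion $(L^\lambda_\infty)^{H_n} \subseteq (L^\lambda_\infty)^{H_{n+1}}$ coming from $H_{n+1} \subseteq H_n$; in particular every transition map of $L^\lambda$ is injective. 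Moreover, by Theorem~\ref{thm:Gamma}(c), each $L^\lambda(\bC^n)$ is either an irreducible $\bO_n$-representation or zero.

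Now let $M \subseteq L^\lambda$ be a subobject. By Proposition~\ref{prop:seqeq}, $M$ is the data of an $\bO_n$-subrepresentation $M(\bC^n) \subseteq L^\lambda(\bC^n)$ for each $n$, compatible with the transition maps (the $H_{n+m,n}$-invariance condition being automatic, since it holds for $L^\lambda$). Since $L^\lambda(\bC^n)$ is irreducible or zero, $M(\bC^n)$ is either $0$ or all of $L^\lambda(\bC^n)$; and since the transition maps of $L^\lambda$ are injective, $M(\bC^n) \ne 0$ forces $M(\bC^{n+1}) \ne 0$, hence $M(\bC^{n+1}) = L^\lambda(\bC^{n+1})$. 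Inducting, I conclude that either $M = 0$, or there is an integer $n_0$ such that $M(\bC^n) = 0$ for $n < n_0$ and $M(\bC^n) = L^\lambda(\bC^n)$ for all $n \ge n_0$.

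In the nonzero case I would then check that any nonzero vector $x \in M(\bC^{n_0})$ generates $M$. The $\cC$-submodule generated by $x$ has $\bC^n$-component the linear span of $\{L^\lambda(f)(x) : f \in \Hom_{\cC}(\bC^{n_0}, \bC^n)\}$. For $n < n_0$ there are no such $f$, so this span is $0$; for $n = n_0$, running over $f \in \bO_{n_0}$ and using irreducibility of $L^\lambda(\bC^{n_0})$ shows it is all of $L^\lambda(\bC^{n_0})$; and for $n > n_0$ it contains the image of $L^\lambda(\bC^{n_0})$ under a standard inclusion $\bC^{n_0} \to \bC^n$, which is nonzero since the transition maps are injective, and it is $\bO_n$-stable, hence is all of $L^\lambda(\bC^n)$ again by irreducibility. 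Thus the submodule generated by $x$ is all of $M$, so $M$ is cyclic, and in particular finitely generated.

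I do not expect a genuine obstacle here; all of the content lies in the second and third paragraphs. The key point is simply that irreducibility of each $L^\lambda(\bC^n)$ together with injectivity of the transition maps of $L^\lambda$ forces every subobject to be a truncation, after which cyclicity is a one-line computation. The only preliminaries needing care are that the transition maps of $S(L^\lambda_\infty)$ are injective and that subobjects of a $\cC$-module can be read off degreewise, both of which are immediate from the construction of $S$ and Proposition~\ref{prop:seqeq}.
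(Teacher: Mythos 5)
Your proof is correct and follows essentially the same route as the paper: both arguments rest on the injectivity of the transition maps of $L^{\lambda}=S(L^{\lambda}_{\infty})$ (i.e.\ torsion-freeness) together with the irreducibility of each nonzero $L^{\lambda}(\bC^m)$ from Theorem~\ref{thm:Gamma}(c), and conclude that any nonzero subobject is generated by a single vector in lowest degree. Your intermediate step of first classifying all subobjects as truncations is a mild reorganization of the paper's argument, not a different method.
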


\begin{proof}
Let $M \ne 0$ be a subobject of $L^{\lambda}$, let $n$ be minimal so that $M(\bC^n) \ne 0$, and let $v \in M(\bC^n)$ be any non-zero element. Consider a morphism $f \colon \bC^n \to \bC^m$ in $\cC$. Then $f_*(v)$ is non-zero since $L^{\lambda}$ is torsion-free, being in the image of $S$. Since $L^{\lambda}(\bC^m)$ is an irreducible representation of $\bO_m$ by Theorem~\ref{thm:Gamma}(c), it follows that $f_*(v)$ generates every element of $L^{\lambda}(\bC^m)$ as a $\bO_m$-representation, and thus every element of $M(\bC^m)=L^{\lambda}(\bC^m)$. The result follows.
\end{proof}

\begin{proof}[Proof of Theorem~\ref{thm:satfin}]
(a) We proceed by induction on length. Let $V \ne 0$ be given, and consider a short exact sequence
\begin{displaymath}
0 \to W \to V \to L \to 0
\end{displaymath}
with $L$ simple. We obtain an exact sequence
\begin{displaymath}
0 \to S(W) \to S(V) \to S(L).
\end{displaymath}
By Lemma~\ref{lem:Lfg}, the image of the map $S(V) \to S(L)$ is finitely generated. By the inductive hypothesis, $S(W)$ is finitely generated. Thus $S(V)$ is finitely generated.

(b) This follows immediately from Theorem~\ref{thm:Gamma}(d).
\end{proof}

\subsection{Noetherianity}

We now prove a fundamental finiteness result about $\cA$:

\begin{theorem} \label{thm:noeth}
The category $\cA$ is locally noetherian: any subobject of a finitely generated object is again finitely generated.
\end{theorem}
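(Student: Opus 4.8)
The plan is to show directly that every finitely generated object of $\cA$ is noetherian, which is equivalent to the statement. The idea is to push the finite-length structure of $\Rep(\bO)$ back across $T$ and $S$, and then to reduce an arbitrary finitely generated object to the objects $T^n$.

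The first and main step would be the following lemma: if $W \in \Rep(\bO)$ has finite length, then $S(W)$ is a noetherian object of $\cA$. To prove it, I would first note that $S(W)$ is finitely generated by Theorem~\ref{thm:satfin}(a), and then argue that ascending chains of subobjects terminate as follows. Given $N_1 \subseteq N_2 \subseteq \cdots \subseteq S(W)$, apply the exact functor $T$; since $T(S(W)) = W$ has finite length, the chain $T(N_1) \subseteq T(N_2) \subseteq \cdots$ of subobjects of $W$ stabilizes, say $T(N_i) = W'$ for all $i \ge i_0$. Because $S(W)$ lies in the image of $S$ it is torsion-free, hence so is each $N_i$, hence the unit map $N_i \to S(T(N_i))$ is injective; naturality of the unit together with left-exactness of $S$ then identifies $N_i$ with a subobject of $S(W') \subseteq S(W)$ for $i \ge i_0$. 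Now $S(W')$ is finitely generated (again Theorem~\ref{thm:satfin}(a), as $W'$ has finite length), so $S(W')/N_{i_0}$ is finitely generated; applying $T$ to it yields $W'/T(N_{i_0}) = 0$, so it is torsion, therefore of finite length, therefore noetherian. Hence the chain $\{N_i/N_{i_0}\}_{i \ge i_0}$ stabilizes, and so does $\{N_i\}$.

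Granting the lemma, the remaining steps are routine. Taking $W = T^n_{\infty}$, which has finite length by Theorem~\ref{thm:repO}(a), and using $S(T^n_{\infty}) = T^n$ from Theorem~\ref{thm:ST}(a), we conclude that each $T^n$ is noetherian; consequently every finite direct sum of $T^n$'s is noetherian, as is every subobject of such a sum. Finally, let $M$ be a finitely generated object of $\cA$. Since $M$ is algebraic it is a subquotient $A'/B'$ of some direct sum $\bigoplus_j T^{n_j}$; lifting a finite generating set of $M$ to elements of $A'$ and using that each lift has finite support, one produces a \emph{finite} sub-sum $P = \bigoplus_{j \in J_0} T^{n_j}$ and subobjects $B \subseteq A \subseteq P$ with $A/B \cong M$ (the image of $A$ in $M$ contains the chosen generators, hence equals $M$). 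Then $A$, being a subobject of the noetherian object $P$, is noetherian, and therefore so is its quotient $M$.

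I expect the lemma to be the only genuine obstacle, since it is the point at which the structural results about $\Rep(\bO)$ and the finite generation of section modules (Theorem~\ref{thm:satfin}(a)) are transported into $\cA$; the delicate point inside it is the comparison of a subobject $N$ of the saturated module $S(W)$ with $S(T(N))$, which succeeds precisely because $N$ is forced to be torsion-free. The passage from the $T^n$ to a general finitely generated module is an elementary ``finite support'' argument and should present no difficulty.
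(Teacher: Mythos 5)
Your proposal is correct and follows essentially the same route as the paper: reduce to finite sums of $T^n$'s by a finite-support argument, and compare a subobject $N$ of a saturated module with $S(T(N))$ using torsion-freeness, exactness of $T$, and Theorem~\ref{thm:satfin}(a). The only cosmetic difference is that you verify the ascending chain condition on $S(W)$ directly (so the torsion, finitely generated, hence finite-length quotient $S(W')/N_{i_0}$ absorbs the chain), whereas the paper shows each subobject of $T^n$ is finitely generated via a small truncation lemma; the two devices are interchangeable here.
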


\begin{lemma}
Let $\wt{N} \in \cA$ be finitely generated and let $N$ be a subobject such that $\wt{N}/N$ has finite length. Then $N$ is also finitely generated.
\end{lemma}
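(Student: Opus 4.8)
The plan is to reduce to the case where $\wt N/N$ is a simple (i.e.\ finite length one) torsion object, then to a single generator, and finally to exhibit that single missing generator as something we can absorb into a finite generating set of $\wt N$. First I would handle the reduction: if $0 = N_0 \subset N_1 \subset \cdots \subset N_k = \wt N/N$ is a composition series of $\wt N/N$, let $N = N^{(0)} \subset N^{(1)} \subset \cdots \subset N^{(k)} = \wt N$ be the corresponding filtration of $\wt N$ between $N$ and $\wt N$, so each successive quotient $N^{(i+1)}/N^{(i)}$ is simple. Working downward, it suffices to show: if $N^{(i+1)}$ is finitely generated and $N^{(i+1)}/N^{(i)}$ is simple, then $N^{(i)}$ is finitely generated. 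Thus without loss of generality $\wt N$ is finitely generated and $\wt N/N$ is simple, hence isomorphic to a simple object of $\cA$; since $\wt N$ is finitely generated and algebraic, all $\wt N(\bC^n)$ are finite dimensional, so the simple quotient $\wt N/N$ is a finitely generated torsion object of $\cA$ (a finite length simple in $\cA_{\tors}$), concentrated in a single degree $n$.

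Next I would pick a finite generating set $x_1,\dots,x_r$ for $\wt N$, say with $x_j \in \wt N(V_j)$. Let $\pi\colon \wt N \to \wt N/N$ be the quotient. Choose any nonzero element $\bar y \in (\wt N/N)(\bC^n)$ and lift it to $y \in \wt N(\bC^n)$. I claim $N$ is generated by $x_1,\dots,x_r$ together with a finite set of elements manufactured as follows. Since $x_1,\dots,x_r$ generate $\wt N$, every element of $\wt N(\bC^m)$ is a linear combination of images $f_*(x_j)$ over morphisms $f\colon V_j \to \bC^m$; applying $\pi$, the same is true in $(\wt N/N)(\bC^m)$, which is zero unless $m=n$ and is a single orbit-span of $\bar y$ when $m=n$. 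Consider the submodule $N' \subseteq \wt N$ generated by $x_1,\dots,x_r$ and by all elements of the form $f_*(x_j) - \sum_k c_{jk} g^{(k)}_*(y)$ lying in $N$, for a finite set of $f$ and a fixed finite set of morphisms $g^{(k)}\colon \bC^n \to \bC^n$ (elements of $\bO_n$, which is not finite — so here is the real work): I need that finitely many such corrections suffice.

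The main obstacle is precisely that $\bO_n(\bC)$ is an infinite (discrete) group, so one cannot naively take "all $g \in \bO_n$" and still have a finite set. The resolution is representation-theoretic finiteness of a single graded piece: $(\wt N/N)(\bC^n)$ is a finite dimensional algebraic representation of $\bO_n$, hence finitely generated as an $\bO_n$-representation, so there is a \emph{finite} subset $S_0 \subset \bO_n$ with $\{g_*(\bar y) : g \in S_0\}$ spanning it; correspondingly the elements $g_*(y)$ for $g \in S_0$, together with $x_1,\dots,x_r$, generate a submodule $N'$ with $N \subseteq N' + (\text{degree } {<} n \text{ part})$, and in degrees $<n$ the map $N \to \wt N/N$ is zero so $N$ agrees with a submodule already generated by the $x_j$. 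More carefully: $\pi(N') = \wt N/N$ in degree $n$ and $\wt N/N$ is concentrated in degree $n$, so $\pi(N') = \pi(\wt N)$, hence $\wt N = N' + N$; but also each chosen generator $g_*(y)$ can be replaced by $g_*(y) - h_*(x_{j(g)})$ where $h_*(x_{j(g)})$ is a lift realizing the same class, giving generators that lie in $N$; then $N' \subseteq N$ and $\wt N = N' + N = N$, forcing $N = N'$ finitely generated. I would double-check the bookkeeping that all correction terms genuinely land in $N$ and that the degree-$<n$ contribution of $N$ is itself finitely generated (it is a subobject of the finitely generated $\wt N$ truncated below degree $n$, but that truncation is again finitely generated as it is a quotient of $\wt N$ by the submodule generated in degrees $\ge n$, and finitely generated objects are closed under quotients); alternatively, and more cleanly, I would simply note that $\wt N = N + N'$ with $N'$ finitely generated and $\wt N/N \cong \wt N/(N \cap N')$... but the sharpest formulation is the one above: arrange the extra generators to lie in $N$ so that $N' \subseteq N$ and $N' + N = \wt N$ forces equality. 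That last step — choosing lifts so the correction terms lie in $N$, using that $\pi$ is surjective in each degree — is routine once the finite set $S_0 \subset \bO_n$ is in hand, so the single real input is the finite generation of the one graded piece $(\wt N/N)(\bC^n)$ as an $\bO_n$-representation.
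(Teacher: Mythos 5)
Your reduction to the case of a simple quotient concentrated in a single degree $n$ is fine, and your observation that the graded pieces are finite dimensional is the right kind of input, but the final assembly of the argument does not work as written. The statement ``$N' \subseteq N$ and $\wt{N} = N' + N = N$, forcing $N = N'$'' is self-contradictory: if $N' \subseteq N$ then $N' + N = N$, which cannot equal $\wt{N}$ since $\wt{N}/N \ne 0$. More substantively, the argument never addresses the actual difficulty. The problem is not in degree $n$ (where $N_n$ is simply a finite-dimensional subspace of the finite-dimensional space $\wt{N}_n$, so no $\bO_n$-translate bookkeeping is needed), but in degrees $m > n$: there $N_m = \wt{N}_m$, yet the expression of an element of $\wt{N}_m$ in terms of the generators $x_j$ may involve generators of degree exactly $n$ (or your lift $y$) that do \emph{not} lie in $N$, so you cannot reuse them as generators of $N$. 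Your correction terms $g_*(y) - h_*(x_{j(g)})$ push the same problem one step along: the images of $y$ in degrees $>n$ again lie in $N$ but are not visibly generated by finitely many elements of $N$.

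The missing idea is the truncation argument: for a finitely generated $M \in \cA$, the subobject $M_{\ge k}$ (equal to $M$ in degrees $\ge k$ and zero below) is again finitely generated, because $M_k$ is finite dimensional and every morphism $\bC^j \to \bC^m$ with $j \le k \le m$ factors through $\bC^k$, so a basis of $M_k$ together with the generators of degree $>k$ generates $M_{\ge k}$. Granting this, the proof is short: choose $k$ with $(\wt{N}/N)_m = 0$ for $m \ge k$, so $N_{\ge k} = \wt{N}_{\ge k}$ is finitely generated, and $N/N_{\ge k}$ is supported in finitely many degrees with finite-dimensional pieces, hence of finite length, hence finitely generated; an extension of finitely generated objects is finitely generated. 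This factorization property of morphisms in $\cC$ is the one structural fact your proposal does not invoke, and without it the degrees above $n$ are not controlled.
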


\begin{proof}
For $M \in \cA$ let $M_{\ge n}$ be the subobject of $M$ defined by $(M_{\ge n})_k=M_k$ if $k \ge n$ and $(M_{\ge n})_k=0$ for $k<n$. If $M$ is finitely generated then so is $M_{\ge n}$, for any $n$: indeed, the generators of $M$ of degree $>n$ together with a basis for the space $M_n$ generate $M_{\ge n}$. Now, let $n$ be sufficiently large so that $(\wt{N}/N)_k=0$ for $k \ge n$. Then $N_{\ge n}=\wt{N}_{\ge n}$, and is thus finitely generated. Consider the exact sequence
\begin{displaymath}
0 \to N_{\ge n} \to N \to N/N_{\ge n} \to 0.
\end{displaymath}
The left term is finitely generated and the right term has finite length. We thus see that $N$ is finitely generated.
\end{proof}

\begin{proof}[Proof of Theorem~\ref{thm:noeth}]
We first show that $M=T^n$ is noetherian. Thus let $N$ be a subobject. Since $T(N)$ is a subobject of the finite length object $T(M)$, it is finite length, and so $S(T(N))$ is finitely generated by Theorem~\ref{thm:satfin}. The map $N \to S(T(N))$ is injective, since $N$ is torsion-free, and has torsion cokernel by the proof of Theorem~\ref{thm:ST}. Since $S(T(N))$ is finitely generated, the cokernel has finite length. Thus $N$ is finitely generated by the lemma, and so $M$ is noetherian.

Suppose now that $M$ is an arbitrary finitely generated object of $\cA$. By definition, we can express $M$ as a quotient of a subobject $K$ of a direct sum of $\bigoplus_{i \in I} F_i$, where each $F_i$ has the form $T^n$. For a finite subset $J$ of $I$, let $F_J=\bigoplus_{j \in J} F_j$ and $K_J=K \cap F_J$. Since $M$ is finitely generated and $K$ is the union of the $K_J$'s, it follows that there is a $J$ such that $K_J \to M$ is surjective. Since each $F_j$ is noetherian by the previous paragraph, so is the finite sum $F_J$. Since noetherianity descends to subquotients, we see that $M$ is noetherian.
\end{proof}

\begin{remark}
Patzt \cite{patzt} has proved a similar, but more general, result in the general linear and symplectic cases; see \S \ref{s:other}. The symplectic analog likely applies in the present case with minor modifications.
\end{remark}

\subsection{Saturation and local cohomology} \label{ss:loccoh}

We now apply the theory of \cite[\S 4]{symu1}. Let $\Sigma \colon \cA \to \cA$ be the composition $ST$, the {\bf saturation functor}. Let $\Gamma \colon \cA \to \cA$ be the functor that associates to $M$ the torsion submodule $M_{\tors}$. This is a left-exact functor, and we refer to its derived functor $\rR \Gamma$ as {\bf local cohomology}. To apply the theory of \cite[\S 4]{symu1}, we must verify the technical hypothesis (Inj):

\begin{proposition}
Injective objects of $\cA_{\tors}$ remain injective in $\cA$.
\end{proposition}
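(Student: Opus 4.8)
The plan is to imitate the argument given above for the inclusion $\Mod_A^0 \hookrightarrow \Mod_A$: first reduce to finite length injectives of $\cA_{\tors}$, then dispose of those by a graded truncation argument. For the reduction, note that $\cA_{\tors}$ is a locally finite Grothendieck category: it is a Serre subcategory of the locally noetherian category $\cA$ (Theorem~\ref{thm:noeth}) that is closed under subobjects and coproducts, every object of $\cA_{\tors}$ is the filtered colimit of its finitely generated subobjects, and a finitely generated torsion object has finite length (as noted above). Hence, by Gabriel's theorem, every injective object of $\cA_{\tors}$ is a direct sum of indecomposable injectives, and in a locally finite category each indecomposable injective is the injective envelope of a simple object. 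Since $\cA$ is locally noetherian, arbitrary direct sums of injective objects of $\cA$ are injective. Thus it suffices to prove: (i) the injective envelope in $\cA_{\tors}$ of a simple object has finite length; and (ii) a finite length injective object of $\cA_{\tors}$ is injective in $\cA$.

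\emph{Step (ii).} Recall $M_{\ge n} \subseteq M$ (with $(M_{\ge n})_k = M_k$ for $k \ge n$ and $0$ otherwise, a subobject because morphisms of $\cC$ are injective) and $M^{\le n} = M/M_{\ge n}$. The functor $M \mapsto M^{\le n}$ is exact and lands in the full subcategory $\cA_{\le n-1} \subseteq \cA_{\tors}$ of modules supported in degrees $<n$. Let $I$ be a finite length injective object of $\cA_{\tors}$; being finitely generated and torsion, $I$ is supported in degrees $<n$ for some $n$. Given a monomorphism $I \hookrightarrow M$ in $\cA$, one has $I \cap M_{\ge n}=0$, so $I \to M \to M^{\le n}$ is a monomorphism in $\cA_{\tors}$; it splits since $I$ is injective there, and composing the splitting with $M \twoheadrightarrow M^{\le n}$ splits $I \hookrightarrow M$. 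Hence $I$ is injective in $\cA$.

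\emph{Step (i).} Every simple object $L$ of $\cA$ is concentrated in a single degree: if $n$ is minimal with $L(\bC^n)\ne 0$, then $L_{\ge n+1}$ is a proper subobject of $L$, hence $0$. So the simple objects of $\cA_{\tors}$ are exactly the modules $L_{n,W}$ obtained from a simple algebraic representation $W$ of $\bO_n$ by extension by zero, and $L_{n,W}$ is concentrated in degree $n$. Let $E$ be its injective envelope in $\cA_{\tors}$. Since $L_{n,W}$ meets every nonzero subobject of $E$ and is concentrated in degree $n$, the subobject $E_{\ge n+1}$ vanishes, so $E$ lies in the full subcategory $\cA_{\le n}$ of modules supported in degrees $\le n$; being injective in $\cA_{\tors}$ and lying in a full subcategory closed under subobjects, $E$ is the injective envelope of $L_{n,W}$ in $\cA_{\le n}$. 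I would then identify this envelope explicitly. Evaluation at degree $n$ is an exact functor $\cA_{\le n} \to \Rep(\bO_n)$, and a direct check shows it has a right adjoint $S_n$ with $S_n(V)=\{V^{H_{n,k}}\}_{0 \le k \le n}$, where $H_{n,k}\le \bO_n$ fixes the standard $\bC^k \subseteq \bC^n$ pointwise and the transition maps are the inclusions $V^{H_{n,k}} \subseteq V^{H_{n,k+1}}$ (the module axioms of Proposition~\ref{prop:seqeq} follow from $H_{m,k}\subseteq H_{n,k}$, and the adjunction from the fact that the image of $M(\bC^k)$ in $M(\bC^n)$ lies in the $H_{n,k}$-invariants). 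A right adjoint of an exact functor preserves injectives, and $\Rep(\bO_n)$ is semisimple (as $\bO_n(\bC)$ is reductive and the base field has characteristic zero), so $W$ is injective there and $S_n(W)$ is injective in $\cA_{\le n}$. Moreover $L_{n,W}\hookrightarrow S_n(W)$ is essential: every nonzero subobject of $S_n(W)$ is nonzero in degree $n$, since all transition maps of $S_n(W)$ are injective. Hence $E \cong S_n(W)$, which, $W$ being finite dimensional, is supported in the finitely many degrees $\le n$ and is finite dimensional in each of them, so it has finite length.

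I expect the main obstacle to be Step (i), and within it the delicate points are: verifying that $S_n$ is right adjoint to evaluation and that $S_n(V)$ genuinely defines an \emph{algebraic} $\cC$-module (so that $S_n(W)\in\cA_{\le n}$ — here one uses that the relevant invariant subspaces are algebraic and that $\cA$ is closed under extensions in $\Mod_{\cC}$), together with the essentiality of $L_{n,W}\hookrightarrow S_n(W)$. The conceptual crux is the observation that the injective envelope of $L_{n,W}$ cannot grow into degrees above $n$; this replaces $\cA_{\tors}$ by the tractable category $\cA_{\le n}$, where the semisimplicity of $\Rep(\bO_n)$ finishes the job.
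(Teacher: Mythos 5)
Your overall architecture matches the paper's: reduce to finite length injectives of $\cA_{\tors}$ by decomposing an arbitrary injective into indecomposables (each the injective envelope of a simple) and using that direct sums of injectives are injective in the locally noetherian category $\cA$; then handle the finite length case by truncating at the top degree of $I$. Your Step~(ii) is exactly the paper's argument in its ``split the monomorphism'' form, and it is correct. The point of divergence is Step~(i): the paper simply cites \cite[(2.1.5)]{infrank} for the fact that the indecomposable injectives of $\cA_{\tors}$ have finite length, whereas you construct the injective envelope explicitly as a coinduced module $S_n(W)$. That identification is a nice byproduct, but as written it has a gap.

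The gap is the claim that $S_n(W)$ lies in $\cA_{\le n}$, i.e.\ is an \emph{algebraic} $\cC$-module. Without this, $S_n$ is not a right adjoint to evaluation as a functor into $\cA_{\le n}$, so you cannot conclude that $S_n(W)$ is injective in $\cA_{\le n}$, nor identify it with $E$ by uniqueness of injective envelopes. The justification you offer --- that $\cA$ is closed under extensions in $\Mod_{\cC}$ --- is itself a substantial unproven assertion that the paper nowhere makes; note that an extension in $\Mod_{\cC}$ of two objects concentrated in degree $k$ is nothing but an extension of representations of the \emph{discrete} group $\bO_k(\bC)$, so your claim would in particular require every such extension of algebraic representations to be algebraic, which is far from obvious. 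Fortunately the repair is easy and does not require deciding this: you do not need $E \cong S_n(W)$, only an embedding $E \hookrightarrow S_n(W)$. Work instead in the Grothendieck category of \emph{pointwise algebraic} $\cC$-modules supported in degrees $\le n$; there $S_n$ visibly takes values and is right adjoint to the exact evaluation functor, so $S_n(W)$ is injective in that category. Since subobjects of $E$ are the same whether computed in $\cA_{\tors}$ or in this larger category, $L_{n,W} \subset E$ remains essential there, so the extension of $L_{n,W} \hookrightarrow S_n(W)$ to a map $E \to S_n(W)$ has kernel meeting $L_{n,W}$ trivially, hence zero. Thus $E$ embeds into the visibly finite length module $S_n(W)$, which is all Step~(i) requires.
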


\begin{proof}
Let $I$ be a finite length injective object of $\cA_{\tors}$. Suppose $M \subset N$ are objects of $\cA$ and we have a map $f \colon M \to I$. Let $k$ be maximal so that $I_k$ is non-zero. Let $M' \subset M$ be the subobject of $M$ defined by $M'_n=M_n$ for $n>k$ and $M'_n=0$ for $n \le k$, and define $N' \subset N$ similarly. Clearly, $f(M')=0$, and so $f$ factors through $M/M'$. The map $M/M' \to N/N'$ is injective, and both objects are torsion, so $f$ extends to a map $g \colon N/N' \to I$. The composition $N \to N/N' \to I$ extends the original map $f$ to $N$. This shows that $I$ is injective in $\cA$.

Now let $I$ be an arbitrary injective object of $\cA_{\tors}$. Since $\cA_{\tors}$ is locally noetherian, we have a decomposition $I =\bigoplus_{\alpha} I_{\alpha}$ where each $I_{\alpha}$ is indecomposable. Since finitely generated objects of $\cA_\tors$ have finite length, $I_{\alpha}$ is the injective envelope of a simple module in $\cA_\tors$, and thus is of finite length itself \cite[(2.1.5)]{infrank}. Thus each $I_{\alpha}$ is injective in $\cA$ by the previous paragraph. Since $\cA$ is locally noetherian (Theorem~\ref{thm:noeth}), any sum of injectives is injective, and so $I$ is injective.
\end{proof}

\begin{proposition}
The indecomposable injectives of $\cA$ are the Schur functors $\bS_{\lambda}$ and the injective envelopes of simple objects $($which are finite length$)$.
\end{proposition}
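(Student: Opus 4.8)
The plan is to classify the indecomposable injectives by combining what we know about $\cA_{\tors}$, about $\Rep(\bO)$, and about how the adjoint pair $(T,S)$ relates the two. First I would observe that the Schur functors $\bS_\lambda$ are injective by Corollary~\ref{cor:Tinj}, and that they are indecomposable: applying $T$ gives $\bS_\lambda(\bC^\infty)$, which is an indecomposable injective of $\Rep(\bO)$ by Theorem~\ref{thm:repO}(c), and since $T(ST(-))=T(-)$ an idempotent decomposition of $\bS_\lambda$ would descend to one of $\bS_\lambda(\bC^\infty)$. On the other side, the injective envelope $E(L)$ of a simple object $L$ of $\cA_{\tors}$ is, by the previous proposition, injective in all of $\cA$, and it is indecomposable since injective envelopes always are; moreover it has finite length, since finitely generated torsion objects have finite length.

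Next I would show there are no other indecomposable injectives. Let $E$ be an arbitrary indecomposable injective of $\cA$. There are two cases according to whether $E$ has a nonzero torsion subobject. If $E_{\tors}\neq 0$, pick a simple subobject $L\subset E_{\tors}$; since $E$ is injective it contains a copy of the injective envelope $E(L)$ (formed in $\cA$, but by the argument above this agrees with the injective envelope formed in $\cA_{\tors}$), and $E(L)$ is injective, hence a direct summand of $E$; by indecomposability $E=E(L)$. If $E_{\tors}=0$, then $E$ is torsion-free, so the unit map $E\to S(T(E))$ is injective. Here $T(E)\in\Rep(\bO)$, and by Theorem~\ref{thm:repO}(b) it embeds in a direct sum of $T^n_\infty$'s; applying the left-exact, direct-sum-preserving functor $S$ embeds $E\hookrightarrow S(T(E))\hookrightarrow\bigoplus S(T^n_\infty)=\bigoplus T^n$. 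Since $E$ is injective it is a direct summand of $\bigoplus T^n=\bigoplus\bigoplus_{\lambda}\bS_\lambda$, hence (by the Krull--Remak--Schmidt property available here, or by a direct argument with the idempotent projecting onto $E$) a direct summand of a finite direct sum of $\bS_\lambda$'s, and therefore, being indecomposable, isomorphic to a single $\bS_\lambda$.

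The main obstacle I anticipate is the torsion-free case, specifically the passage from "$E$ is a direct summand of an arbitrary (possibly infinite) direct sum of $\bS_\lambda$'s" to "$E\cong\bS_\lambda$ for some single $\lambda$." This is where one needs either a Krull--Remak--Schmidt-type statement for the injectives of $\cA$, or a careful argument that an indecomposable summand of $\bigoplus_{i\in I}\bS_{\lambda_i}$ must already be a summand of a finite subsum. The cleanest route is probably: the injective $E$ contains a finitely generated nonzero subobject $E'$; applying $T$ and using that $T(\bigoplus\bS_{\lambda_i})=\bigoplus\bS_{\lambda_i}(\bC^\infty)$ has the property that any finite-length subobject lies in a finite subsum (each $\bS_{\lambda_i}(\bC^\infty)$ has finite length by Theorem~\ref{thm:repO}(a)), one reduces to a finite direct sum and invokes Krull--Remak--Schmidt there (or cites Theorem~\ref{thm:repO}(c) directly for $\Rep(\bO)$ and pulls the splitting back through $S$).

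I would also want to double-check one compatibility in the torsion case: that the injective envelope of a simple $L\in\cA_{\tors}$ computed inside $\cA$ really coincides with the one computed inside $\cA_{\tors}$. This follows because $\cA_{\tors}$ is closed under subobjects in $\cA$ and the $\cA_{\tors}$-envelope, being injective in $\cA$ by the preceding proposition, is an injective object of $\cA$ containing $L$ as an essential subobject — and an essential extension in $\cA_{\tors}$ remains essential in $\cA$ since any nonzero subobject of a torsion object is torsion. With that in hand the two cases are exhaustive and the classification is complete.
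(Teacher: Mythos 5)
Your argument is correct, but it takes a genuinely different route from the paper. The paper's proof is essentially a citation: it invokes the general structural result \cite[Proposition~4.5]{symu1} (whose hypothesis (Inj) is exactly the preceding proposition), which says that the indecomposable injectives of $\cA$ are those of $\cA_{\tors}$ together with the objects $S(I)$ for $I$ an indecomposable injective of $\Rep(\bO)$, and then identifies the two families via Theorem~\ref{thm:repO}(c) and $S(\bS_{\lambda}(\bC^{\infty}))=\bS_{\lambda}$. You instead reprove that structural result from scratch in this setting, via the dichotomy $E_{\tors}\ne 0$ versus $E$ torsion-free; both halves of your case analysis are sound (in the torsion-free case the unit $E\to ST(E)$ is indeed injective, and $S$ is left exact and commutes with direct sums as used in the proof of Theorem~\ref{thm:ST}(b)). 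The one step you rightly flag --- extracting a single $\bS_{\lambda}$ from a summand of an infinite sum $\bigoplus_i\bS_{\lambda_i}$ --- is cleanly handled by the Krull--Remak--Schmidt--Azumaya theorem for injectives in a locally noetherian Grothendieck category (available here by Theorem~\ref{thm:noeth}, and note $\End_{\cA}(\bS_\lambda)\cong\End(\bS_\lambda(\bC^\infty))$ is local by adjunction), so your reduction goes through. Two small points worth tightening: the finite-length claim for the envelope $E(L)$ does not follow merely from ``finitely generated torsion objects have finite length'' (one must know the envelope is finitely generated); the paper instead cites \cite[(2.1.5)]{infrank} for this. And to match the statement exactly you should observe that every simple object of $\cA$ is automatically torsion (its truncation $M_{\ge n+1}$ is a proper subobject, so a simple is supported in a single degree), so that ``injective envelopes of simple objects'' coincides with what your torsion case produces. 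Your approach buys self-containedness; the paper's buys brevity and reusability across the other cases in \S\ref{s:other}.
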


\begin{proof}
By \cite[Proposition~4.5]{symu1}, the indecomposable injectives of $\cA$ are exactly those of $\cA_{\tors}$ and those of the form $S(I)$ with $I \in \Rep(\bO)$ indecomposable injective. The indecomposable injectives of $\Rep(\bO)$ are the $\bS_{\lambda}(\bC^{\infty})$ (Theorem~\ref{thm:repO}(c)) and $S(\bS_{\lambda}(\bC^{\infty}))=\bS_{\lambda}$ (Theorem~\ref{thm:ST}(a)).
\end{proof}

\begin{proposition} \label{prop:tri}
For $M \in \rD^+(\cA)$ we have a canonical exact triangle
\begin{displaymath}
\rR \Gamma(M) \to M \to \rR \Sigma(M) \to.
\end{displaymath}
Furthermore, if $M \in \rD^b_{\fgen}(\cA)$, then so are $\rR \Gamma(M)$ and $\rR \Sigma(M)$. More precisely, $\rR \Sigma(M)$ is represented by a finite length complex whose terms are finite sums of Schur functors, while $\rR \Gamma(M)$ is represented by a finite length complex whose terms are finite length injective objects.
\end{proposition}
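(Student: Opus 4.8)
The canonical triangle is formal: having verified (Inj) above, the machinery of \cite[\S 4]{symu1} applied to the localization sequence $\cA_{\tors}\hookrightarrow\cA\to\Rep(\bO)$ produces the triangle $\rR\Gamma(M)\to M\to\rR\Sigma(M)\to$ for $M\in\rD^+(\cA)$. Since $T$ is exact and carries injectives to injectives (Theorem~\ref{thm:ST}(a) together with the classification of indecomposable injectives of $\cA$), we have $\rR\Sigma=\rR S\circ T$; and $T$ sends a finitely generated object of $\cA$ to a subquotient of a finite sum of $T^n_\infty$'s, which is finite length by Theorem~\ref{thm:repO}(a), so $T(M)\in\rD^b(\Rep(\bO))$ has finite-length cohomology whenever $M\in\rD^b_{\fgen}(\cA)$. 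The plan is: (i) show $T(M)$ is quasi-isomorphic to a finite complex of finite direct sums of the injectives $\bS_\lambda(\bC^\infty)$; then $\rR\Sigma(M)=\rR S(T(M))$ is computed by applying $S$, which by $S(\bS_\lambda(\bC^\infty))=\bS_\lambda$ (Theorem~\ref{thm:ST}(a)) and the $S$-acyclicity of injectives gives the asserted description of $\rR\Sigma(M)$, and in particular $\rR\Sigma(M)\in\rD^b_{\fgen}(\cA)$ by Theorem~\ref{thm:noeth}. Consequently $\rR\Gamma(M)$, the shifted mapping cone of $M\to\rR\Sigma(M)$, lies in $\rD^b_{\fgen}(\cA)$; computing $\rR\Gamma$ from an injective resolution in $\cA$ and applying the torsion functor shows its cohomology is torsion, hence finite length. (ii) Show that any object of $\rD^b(\cA)$ with bounded finite-length torsion cohomology --- such as $\rR\Gamma(M)$ --- is represented by a finite complex of finite-length injective objects of $\cA_{\tors}$, which remain injective in $\cA$ by (Inj).

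Both (i) and (ii) are instances of one standard fact: in an abelian category in which every finite-length object has a finite-length injective envelope and finite injective dimension, every bounded complex with finite-length cohomology is quasi-isomorphic to a finite complex of such injectives. I would prove this by induction on cohomological amplitude, the base case being a finite injective resolution of a single finite-length object, and the inductive step using the truncation triangle $\tau_{\le d-1}X\to X\to\mathrm{H}^d(X)[-d]\to$ together with the fact that its connecting morphism, regarded as a map of bounded complexes of injectives, is realized by a genuine chain map (derived $\Hom$ equals homotopy $\Hom$ there), so that its mapping cone is again a finite complex of injectives. For $\Rep(\bO)$ the hypotheses hold by Theorem~\ref{thm:repO}(a),(c),(e).

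For $\cA_{\tors}$ the hypotheses require an argument. That finite-length injective envelopes in $\cA_{\tors}$ are finite length is already recorded (proof of the proposition above, via \cite[(2.1.5)]{infrank}), so it remains to bound injective dimension, which by d\'evissage reduces to the case of a simple object $L$, concentrated in a single degree $n$. Its injective envelope $E$ in $\cA_{\tors}$ is supported in degrees $\le n$ (a nonzero submodule generated in a degree $>n$ would miss the socle $L$, violating essentiality) and satisfies $E_n=L_n$ (as $\Rep(\bO_n)$ is semisimple, a complement of $L_n$ in $E_n$ exists and generates a nonzero submodule --- supported only in degree $n$ --- missing the socle). Hence $E/L$ is finite length and supported in degrees $<n$, so by induction on $n$ its composition factors, and therefore $E/L$, have finite injective dimension, and so does $L$; the base case $n=0$ is that $L$ is injective in $\cA_{\tors}$, which follows from the Baer criterion in the locally noetherian category $\cA_{\tors}$ once one checks $\Ext^1_{\cA_{\tors}}(L',L)=0$ for all simple $L'$ (immediate, since the transition maps of $L$ vanish).

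The main obstacle is this structural input on $\cA_{\tors}$; the rest is the routine homological bookkeeping of truncation triangles and the inputs of Theorems~\ref{thm:repO},~\ref{thm:ST} and~\ref{thm:satfin}. If that input is already contained in \cite[\S 4]{symu1}, the proposition is a direct citation.
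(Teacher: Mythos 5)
Your proposal is correct, and for the triangle and for $\rR\Sigma(M)$ it coincides with the paper's argument: cite \cite[\S 4]{symu1} for the triangle (using (Inj)), use exactness of $T$ to write $\rR\Sigma=\rR S\circ T$, replace $T(M)$ by a finite complex of finite sums of $\bS_{\lambda}(\bC^{\infty})$'s using Theorem~\ref{thm:repO}, and apply $S$. Where you genuinely diverge is on $\rR\Gamma(M)$: the paper dismisses this as ``immediate,'' while you correctly observe that producing a \emph{finite} complex of finite-length injectives requires knowing that finite-length objects of $\cA_{\tors}$ have finite injective dimension --- this cannot be extracted from the triangle alone (the cone of $M\to\rR\Sigma(M)$ is not visibly a complex of injectives), and it cannot be deduced from the corollary on finite injective dimension in $\cA$ without circularity, since that corollary is derived from this proposition. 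Your induction on the degree supporting a simple $L$ is sound: the injective envelope $E$ of $L$ in $\cA_{\tors}$ is supported in degrees $\le n$ and satisfies $E_n=L_n$ by the essentiality arguments you give (both submodules you construct are genuinely submodules because all morphisms in $\cC$ are injections, so nothing maps down in degree, and $E$ vanishes above degree $n$), hence $E/L$ lives strictly below degree $n$ and the induction closes; the base case is even subsumed by the inductive step ($E/L$ supported in degrees $<0$ forces $E=L$). The remaining homological bookkeeping (realizing connecting maps by chain maps into bounded-below complexes of injectives, taking cones) is standard and correctly deployed. So your write-up is not shorter than the paper's but is more complete: it supplies the one structural fact about $\cA_{\tors}$ that the paper's proof leaves unstated.
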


\begin{proof}
The existence of the triangle is \cite[Proposition~4.6]{symu1}. Now suppose $M \in \rD^b_{\fgen}(\cA)$. We have already shown finiteness of $\rR \Sigma(M)$ (Theorem~\ref{thm:satfin}), while finiteness of $\rR \Gamma(M)$ follows from this and the triangle. For the more precise statements, we argue as follows. Since $T(M)$ is a finite length complex whose cohomology groups are finite length objects, Theorem~\ref{thm:repO}(c,d) ensures it is quasi-isomorphic to a complex $N^{\bullet}$ whose terms are sums of representations of the form $\bS_{\lambda}(\bC^{\infty})$. We thus have $\rR \Sigma(M)=S(N^{\bullet})$. The claim now follows from the identity $S(\bS_{\lambda}(\bC^{\infty}))=\bS_{\lambda}$. The claim about $\rR \Gamma(M)$ is immediate.
\end{proof}

\begin{corollary}
A finitely generated object of $\cA$ has finite injective dimension.
\end{corollary}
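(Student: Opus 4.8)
The plan is to read this off directly from Proposition~\ref{prop:tri}. Let $M$ be a finitely generated object of $\cA$. Since $\cA$ is locally noetherian (Theorem~\ref{thm:noeth}), we may regard $M$, placed in degree~$0$, as an object of $\rD^b_{\fgen}(\cA)$, and Proposition~\ref{prop:tri} then provides an exact triangle
\[
\rR\Gamma(M) \to M \to \rR\Sigma(M) \to
\]
in which $\rR\Sigma(M)$ is represented by a finite length complex $I^{\bullet}$ whose terms are finite direct sums of Schur functors $\bS_{\lambda}$, and $\rR\Gamma(M)$ is represented by a finite length complex $J^{\bullet}$ whose terms are finite length injective objects.

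The first observation is that $I^{\bullet}$ and $J^{\bullet}$ are in fact bounded complexes of \emph{injective} objects of $\cA$: the $\bS_{\lambda}$ are injective by Corollary~\ref{cor:Tinj}, so finite direct sums of them are injective, and we saw at the beginning of \S\ref{ss:loccoh} that finite length injective objects of $\cA_{\tors}$ remain injective in $\cA$. Thus both $\rR\Sigma(M)$ and $\rR\Gamma(M)$ are isomorphic in $\rD^b(\cA)$ to bounded complexes of injectives.

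It then remains to use the two-out-of-three property for this condition. Rotating the triangle, $M$ is the mapping cone of the connecting morphism $\rR\Sigma(M)[-1] \to \rR\Gamma(M)$; since $J^{\bullet}$ is a bounded-below complex of injectives, this morphism is represented by an honest chain map $I^{\bullet}[-1] \to J^{\bullet}$, whose mapping cone is a bounded complex of injectives quasi-isomorphic to $M$. Hence $M$ has finite injective dimension, bounded by the length of this cone. I do not anticipate any real difficulty here: the substance of the argument is contained in Proposition~\ref{prop:tri}, which rests on the finiteness Theorem~\ref{thm:satfin}; the only point requiring a word of care is the mapping-cone step just described, and note that it only yields an injective-dimension bound depending on $M$, not a uniform one over all finitely generated objects.
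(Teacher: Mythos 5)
Your argument is correct and is exactly the deduction the paper intends: the corollary is stated without proof as an immediate consequence of Proposition~\ref{prop:tri}, and your write-up simply supplies the details (injectivity of the terms via Corollary~\ref{cor:Tinj} and the first proposition of \S\ref{ss:loccoh}, then the mapping-cone step). No issues.
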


\subsection{The Grothendieck group} \label{ss:K}

We now discuss the Grothendieck group $\rK(\cA)$ of $\cA$.

\begin{theorem} \label{thm:Ktheory}
We have a canonical isomorphism
\begin{displaymath}
\rK(\cA) = \rK(\Rep(\bO)) \oplus \bigoplus_{n \ge 0} \rK(\Rep(\bO_n)).
\end{displaymath}
\end{theorem}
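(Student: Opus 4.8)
The plan is to combine the localization exact sequence attached to the Serre subcategory $\cA_{\tors}\subset\cA$ with an explicit classification of the simple torsion objects. Throughout, $\rK$ denotes the Grothendieck group of the subcategory of finitely generated objects; recall that finitely generated torsion objects of $\cA$ have finite length, and that finitely generated objects of $\Rep(\bO)$ have finite length by Theorem~\ref{thm:repO}(a). By Theorem~\ref{thm:ST}(e), $T$ induces an equivalence $\cA/\cA_{\tors}\xrightarrow{\ \sim\ }\Rep(\bO)$, so the standard localization sequence for Grothendieck groups of Serre quotients gives a right-exact sequence
\begin{displaymath}
\rK(\cA_{\tors})\xrightarrow{\ \iota\ }\rK(\cA)\xrightarrow{\ \pi\ }\rK(\Rep(\bO))\to 0,
\end{displaymath}
where $\iota$ is induced by the inclusion $\cA_{\tors}\hookrightarrow\cA$ and $\pi=\rK(T)$. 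It remains to split $\pi$ and to show that $\iota$ is injective.

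To split $\pi$, I would use $\rR S$. By Theorem~\ref{thm:satfin}, for finite length $V\in\Rep(\bO)$ each $\rR^iS(V)$ is a finitely generated object of $\cA$, and it vanishes for $i\gg0$ by Theorem~\ref{thm:Gamma}(d); hence $[V]\mapsto\sum_{i\ge0}(-1)^i[\rR^iS(V)]$ is a well-defined homomorphism $\chi\colon\rK(\Rep(\bO))\to\rK(\cA)$, additivity coming from the long exact sequence of the $\rR^iS$. Since $T$ is exact, annihilates torsion objects, and satisfies $TS\cong\id$ (Theorem~\ref{thm:ST}(a,c)), while $\rR^iS(V)$ is torsion for $i\ge1$ (Theorem~\ref{thm:ST}(d) with Theorem~\ref{thm:Gamma}(d)), applying $T$ to $\chi([V])$ yields $\pi\circ\chi=\id$. (Concretely, on the injective objects $\bS_\lambda(\bC^\infty)$, whose classes span $\rK(\Rep(\bO))$, the map $\chi$ is just $[\bS_\lambda(\bC^\infty)]\mapsto[\bS_\lambda]$.) Thus $\rK(\cA)=\operatorname{image}(\chi)\oplus\ker(\pi)=\rK(\Rep(\bO))\oplus\operatorname{image}(\iota)$.

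For the torsion part, fix $d\ge0$ and an algebraic $\bO_d$-representation $\sigma$, and let $S'_d(\sigma)$ be the $\cC$-module, defined via Proposition~\ref{prop:seqeq}, that is supported in degree $d$ with value $\sigma$ there; this lies in $\cA$ since $S'_d((\bC^d)^{\otimes n})$ is the subquotient $(T^n)_{\ge d}/(T^n)_{\ge d+1}$ of $T^n$, and $S'_d$ is exact. The simple objects of $\cA_{\tors}$ are exactly the $S'_d(\sigma)$ with $\sigma$ irreducible: such an $S'_d(\sigma)$ is clearly simple, and conversely, if $L\in\cA_{\tors}$ is simple and $d$ is minimal with $L(\bC^d)\ne0$, then the submodule $L_{\ge d+1}$ has $(L_{\ge d+1})(\bC^d)=0$, hence is proper, hence zero, so $L=S'_d(L(\bC^d))$ with $L(\bC^d)$ a simple algebraic $\bO_d$-representation. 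Since the finitely generated objects of $\cA_{\tors}$ form a finite length abelian category, $\rK(\cA_{\tors})$ is free on the classes of its simple objects, which gives $\rK(\cA_{\tors})=\bigoplus_{d\ge0}\rK(\Rep(\bO_d))$ via $[S'_d(\sigma)]\leftrightarrow[\sigma]$. Finally, for the injectivity of $\iota$: for each $d$ and each irreducible algebraic $\bO_d$-representation $\sigma$, the assignment $M\mapsto\bigl(\text{multiplicity of }\sigma\text{ in the finite-dimensional }\bO_d\text{-representation }M(\bC^d)\bigr)$ is additive on short exact sequences of finitely generated objects of $\cA$, hence defines a homomorphism $e_{d,\sigma}\colon\rK(\cA)\to\bZ$, and $e_{d,\sigma}([S'_{d'}(\tau)])=\delta_{dd'}\delta_{\sigma\tau}$. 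Since the $[S'_d(\sigma)]$ form a $\bZ$-basis of $\rK(\cA_{\tors})$, these functionals force $\iota$ to be injective, so $\operatorname{image}(\iota)\cong\rK(\cA_{\tors})$. Assembling the pieces gives $\rK(\cA)=\rK(\Rep(\bO))\oplus\bigoplus_{n\ge0}\rK(\Rep(\bO_n))$.

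The one genuinely substantive point is the injectivity of $\iota$: the localization sequence is only right exact a priori, so one must independently rule out relations among the classes of the torsion simples in $\rK(\cA)$, and this is precisely what the evaluation-at-$\bC^d$ functionals $e_{d,\sigma}$ accomplish. Everything else is either immediate from the theory already developed (Theorems~\ref{thm:repO}, \ref{thm:Gamma}, \ref{thm:ST}, \ref{thm:satfin}) or routine bookkeeping about the degree filtration of $\cA_{\tors}$.
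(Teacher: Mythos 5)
Your argument is correct, and it rests on the same decomposition as the paper's proof: split $\rK(\cA)$ into the contribution of $\cA_{\tors}$ and of the quotient $\cA/\cA_{\tors}\simeq\Rep(\bO)$, then identify $\rK(\cA_{\tors})$ with $\bigoplus_{n}\rK(\Rep(\bO_n))$ via the degree filtration on torsion objects. The real difference is self-containedness: the paper obtains the direct sum decomposition $\rK(\cA)=\rK(\cA/\cA_{\tors})\oplus\rK(\cA_{\tors})$ by citing \cite[Proposition~4.17]{symu1}, whereas you reprove it from ingredients already established in the paper. Your splitting $\chi([V])=\sum_{i\ge 0}(-1)^i[\rR^iS(V)]$ is precisely the map described in Remark~\ref{rmk:Ktheory}; for the uniform vanishing of $\rR^iS(V)$ in $i$ it is cleanest to invoke the finite injective dimension of finite length objects (Theorem~\ref{thm:repO}(e)) rather than only the pointwise statement of Theorem~\ref{thm:Gamma}(d), though combining the two parts of that statement also yields a uniform bound. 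You also correctly isolate the one point the paper's short proof leaves entirely implicit in the citation, namely the injectivity of $\rK(\cA_{\tors})\to\rK(\cA)$, and your evaluation functionals $e_{d,\sigma}$ settle it cleanly, using that $M(\bC^d)$ is a finite-dimensional algebraic, hence semisimple, $\bO_d$-representation when $M$ is finitely generated. The only inputs you take on faith are genuinely standard: exactness of the localization sequence in the middle term and freeness of $\rK$ of a finite length abelian category on its simple objects. What your route buys is a proof readable without \cite{symu1}, with the isomorphism made explicit in both directions; what the paper's route buys is brevity and consistency with the general formalism of \cite[\S 4]{symu1} used throughout \S\ref{ss:loccoh}.
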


\begin{proof}
From \cite[Proposition~4.17]{symu1}, we obtain a canonical isomorphism $\rK(\cA)=\rK(\cA/\cA_{\tors}) \oplus \rK(\cA_{\tors})$. We have $\rK(\cA/\cA_{\tors})=\rK(\Rep(\bO))$ by Theorem~\ref{thm:ST}. Every finitely generated object of $\cA_{\tors}$ admits a finite filtration where the graded pieces are supported in a single degree. The category of objects of $\cA$ supported in degree $n$ is equivalent to $\Rep(\bO_n)$, and so the result follows.
\end{proof}

\begin{remark} \label{rmk:Ktheory}
The isomorphism in Theorem~\ref{thm:Ktheory} can be described explicitly as follows. The class of $[V] \in \rK(\Rep(\bO))$ corresponds to $[\rR S(V)] \in \rK(\cA)$. In particular, when $V=\bS_{\lambda}(\bC^{\infty})$ we have $[\rR S(V)]=[\bS_{\lambda}]$, and so the the classes $[\bS_{\lambda}]$ span summand of $\rK(\cA)$ corresponding to $\rK(\Rep(\bO))$. The class of $[V] \in \rK(\Rep(\bO_n))$ corresponds to the class $[M] \in \rK(\cA)$, where $M$ is the $\cC$-module with $M_n=V$ and $M_k=0$ for $k \ne n$.
\end{remark}

\begin{remark}
The Grothendieck group of $\SO_n$ is a polynomial ring in $\lfloor n/2 \rfloor$ variables, generated by exterior powers of the standard representation. This follows from the classification of finite-dimensional representations of a semisimple group by highest weight theory. The Grothendieck group of $\bO_n$ is slightly more complicated, since $\bO_n$ is not connected.
\end{remark}

Given an object $M$ of $\cA$, we define the {\bf Hilbert function} of $M$ by $\HF_M(n)=\dim_\bC M(\bC^n)$. We define the {\bf Hilbert series} of $M$ by
\begin{displaymath}
\rH_M(t) = \sum_{n \ge 0} \dim_\bC M(\bC^n) t^n.
\end{displaymath}
Both constructions are additive in short exact sequences, and thus factor through the Grothendieck group $\rK(\cA)$.

\begin{theorem} \label{thm:hilbert}
Let $M$ be a finitely generated object in $\cA$. Then there exists a polynomial $p \in \bQ[t]$ such that
\begin{displaymath}
p(n)=\dim_\bC M(\bC^n) - \sum_{i \ge 0} (-1)^i \dim \rR^i \Gamma(M)_n
\end{displaymath}
holds for all integers $n \ge 0$. In particular, $\HF_M(n)=p(n)$ for $n \gg 0$, and $\rH_M(t)$ is a rational function whose denominator is a power of $1-t$.
\end{theorem}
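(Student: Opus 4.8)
The plan is to read the formula off the local cohomology triangle of Proposition~\ref{prop:tri} and then reduce everything to the classical fact that $n \mapsto \dim \bS_\lambda(\bC^n)$ is a polynomial.

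First I would regard the finitely generated object $M$ as an object of $\rD^b_{\fgen}(\cA)$ concentrated in degree $0$ and apply Proposition~\ref{prop:tri} to obtain the triangle $\rR\Gamma(M) \to M \to \rR\Sigma(M) \to$. Evaluation on $\bC^n$ is an exact functor $\cA \to \Vec$, so applying it to this triangle yields, for each fixed $n$, a bounded long exact sequence of finite-dimensional vector spaces (finite-dimensionality holds because $M$ and the cohomology objects of $\rR\Gamma(M)$ and $\rR\Sigma(M)$ are finitely generated and algebraic, by Proposition~\ref{prop:tri}). Taking the alternating sum of dimensions along this long exact sequence gives, for every $n \ge 0$,
\[
\dim M_n - \sum_{i \ge 0} (-1)^i \dim \rR^i\Gamma(M)_n \;=\; \sum_{i \ge 0} (-1)^i \dim \rR^i\Sigma(M)_n .
\]
By the refined part of Proposition~\ref{prop:tri}, $\rR\Sigma(M)$ is represented by a finite complex whose terms are finite direct sums of Schur functors $\bS_\lambda$; hence the right-hand side equals $\sum_\lambda c_\lambda \dim \bS_\lambda(\bC^n)$, where $c_\lambda \in \bZ$ is the alternating sum of the multiplicities of $\bS_\lambda$ among the terms of that complex (a finite sum). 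Since $\dim \bS_\lambda(\bC^n)$ agrees, for all $n \ge 0$, with the polynomial $d_\lambda(t) = \prod_{u \in \lambda} (t + c(u))/h(u) \in \bQ[t]$ given by the hook--content formula, the element $p(t) := \sum_\lambda c_\lambda d_\lambda(t)$ of $\bQ[t]$ satisfies the asserted identity.

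For the remaining assertions, I would first check that each $\rR^i\Gamma(M)$ is supported in only finitely many degrees. By Proposition~\ref{prop:tri} these objects are finitely generated, and they are torsion: this follows from the long exact sequence of the triangle together with the facts, established in the proof of Theorem~\ref{thm:ST}(e), that the cokernel of $M \to ST(M)$ and the objects $\rR^{\ge 1}S(V)$ are torsion. A finitely generated torsion object has finite length, hence is supported in finitely many degrees. As only finitely many $i$ contribute, $\sum_i (-1)^i \dim \rR^i\Gamma(M)_n = 0$ for $n \gg 0$, so $\HF_M(n) = p(n)$ for $n \gg 0$. Finally, writing $\dim M_n = p(n) + e(n)$ with $e(n) = \sum_i (-1)^i \dim \rR^i\Gamma(M)_n$, the series $\sum_n e(n) t^n$ is a polynomial in $t$ (finite sum by the previous sentence), while $\sum_{n \ge 0} p(n) t^n$ is a rational function with denominator $(1-t)^{\deg p + 1}$ by the standard formula for generating functions of polynomial sequences; adding these shows $\rH_M(t)$ is rational with denominator a power of $1-t$.

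The proof is essentially bookkeeping with Euler characteristics: all the substance is already packaged in Proposition~\ref{prop:tri} (the local cohomology triangle and the Schur-functor form of $\rR\Sigma$), which itself rests on the noetherianity and finiteness results of the previous subsections. The only genuinely external ingredient is the classical polynomiality of $n \mapsto \dim \bS_\lambda(\bC^n)$, and the one place to stay slightly careful is verifying that \emph{every} $\rR^i\Gamma(M)$ — not merely $\Gamma(M) = M_\tors$ — is degree-bounded, since that is precisely what forces the correction term $e(n)$ to vanish for $n \gg 0$.
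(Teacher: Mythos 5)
Your proof is correct, and it takes a slightly different route from the paper's. The paper exploits the fact that both $n \mapsto \dim M_n$ and $n \mapsto \sum_i (-1)^i \dim \rR^i\Gamma(M)_n$ are additive in short exact sequences, hence factor through $\rK(\cA)$; it then verifies the identity only on a spanning set of classes, namely torsion modules (where $\rR\Gamma(M)=M$ and one takes $p=0$) and the injectives $\bS_\lambda$ (where $\rR\Gamma(M)=0$ by Corollary~\ref{cor:Tinj} and Proposition~\ref{prop:tri}, and $p(n)=\dim\bS_\lambda(\bC^n)$). You instead work with an arbitrary finitely generated $M$ directly: evaluate the triangle of Proposition~\ref{prop:tri} at $\bC^n$ (evaluation is exact, being pointwise), take Euler characteristics, and identify the left-hand side of the desired identity with $\sum_i(-1)^i\dim\rR^i\Sigma(M)_n$, which the refined part of Proposition~\ref{prop:tri} expresses as a fixed $\bZ$-linear combination of the polynomials $\dim\bS_\lambda(\bC^n)$. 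Both arguments rest entirely on Proposition~\ref{prop:tri} plus classical polynomiality of $\dim\bS_\lambda(\bC^n)$; yours avoids invoking the description of $\rK(\cA)$ from Theorem~\ref{thm:Ktheory}/Remark~\ref{rmk:Ktheory} and produces the polynomial $p$ explicitly as the Euler characteristic of the Schur-functor complex representing $\rR\Sigma(M)$, while the paper's d\'evissage makes each individual case trivial. Your treatment of the ``in particular'' clause (finite length of the $\rR^i\Gamma(M)$, hence vanishing of the correction term for $n\gg 0$, hence rationality of $\rH_M(t)$ with denominator a power of $1-t$) is also complete, and is in fact spelled out more carefully than in the paper.
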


\begin{proof}
It suffices to verify the theorem for classes $[M]$ that span $\rK(\cA)$. First suppose that $M$ is a torsion module. Then $\Gamma(M)=M$ (obvious) and $\rR^i \Gamma(M)=0$ for $i>0$ (see \cite[Proposition~4.2]{symu1}). It follows that the identity holds with $p=0$.

Now suppose that $M=\bS_{\lambda}$; note that the classes $[\bS_{\lambda}]$ and the torsion classes span $\rK(\cA)$. Then $\Sigma(M)=M$ (Theorem~\ref{thm:ST}(a)) and $\rR^i \Sigma(M)=0$ for $i>0$ since $M$ is injective (Corollary~\ref{cor:Tinj}), and so $\rR \Gamma(M)=0$ by Proposition~\ref{prop:tri}. We thus take $p(n)=\dim \bS_{\lambda}(\bC^n)$. This is well-known to be a polynomial function of $n$.
\end{proof}

\begin{remark}
In words, the above theorem shows that the Hilbert function coincides with a polynomial (called the Hilbert polynomial) for large values of $n$, and that the discrepancy between the Hilbert function and Hilbert polynomial at small values of $n$ is calculated by local cohomology.
\end{remark}

\subsection{Pointwise algebraic $\cC$-modules} \label{ss:pointwise}

A $\cC$-module $M$ is {\bf pointwise algebraic} if $M_n$ is an algebraic representation of $\bO_n$ for each $n$. Every algebraic representation is pointwise algebraic: this follows immediately from the definitions. We now show that the converse is not true.

Let $M^{(k)}_n=\Sym^{2k}(\bC^n)$. This is an algebraic representation of $\bO_n$. The obvious transition maps $M^{(k)}_n \to M^{(k)}_{n+1}$ give $M^{(k)}$ the structure of a $\cC$-module, and as such it is clearly algebraic. There is a natural surjection $M^{(k+1)}_n \to M^{(k)}_n$ coming from the $\bO_n$-coinvariant in $\Sym^2(\bC^n)$. Let $M_n$ be the inverse limit of the $M^{(k)}_n$'s, taken in the category of algebraic $\bO_n$ representations. Explicitly, $\Sym^{2k}(\bC^n)$ decomposes as a sum of $k+1$ distinct irreducible representations of $\bO_n$, and the inverse limit is simply the direct sum of all of these irreducibles. One easily sees that $M=(M_n)_{n \ge 0}$ is a $\cC$-module, and it is pointwise algebraic by definition. We now show that it is not algebraic.

Each $M^{(k)}$ is generated in degrees $\le 1$. Indeed, considering $M^{(k)}_n$ as the space of degree $2k$ polynomials in variables $x_1, \ldots, x_n$, the image of the transition map $M^{(k)}_1 \to M^{(k)}_n$ is the span of $x_1^{2k}$, which generates $M^{(k)}_n$ as a representation of $\bO_n$. It follows that $M$ is generated in degrees $\le 1$; indeed, the $\bO_n$-subrepresentation of $M_n$ generated by $M_1$ surjects on $M_n^{(k)}$ for all $k$, and is therefore equal to $M_n$ by the explicit description given in the previous paragraph. Since $M_1$ is one-dimensional, we see that $M$ is finitely generated as a $\cC$-module. Since $M_n$ is not finite dimensional for $n>1$, it therefore cannot be algebraic.

Similar examples can be constructed for the other versions of $\cC$ defined in \S\ref{s:other}. In particular, this shows that the expectations outlined in \cite[(3.4.10)]{infrank} are not correct.

\begin{remark}
Let $A$ be the tca $\Sym(\Sym^2(\bC^{\infty}))$, so that $\Rep(\bO)=\Mod_A^0$ (see the proof of Proposition~\ref{prop:colimGamma}). The section functor $S$ can be regarded as a functor from $\Mod_A^0$ to the category of algebraic $\cC$-modules. Given an arbitrary $A$-module $N$, the quotient $N^{\le n}$ (as defined in Proposition~\ref{prop:colimGamma}) is supported at~0, and so $S(N^{\le n})$ is an algebraic $\cC$-module. Define $S(N)$ to be the inverse limit of these modules, taken in the category of pointwise algebraic $\cC$-modules (this category is Grothendieck, and thus complete). This construction extends $S$ to a continuous functor from $\Mod_A$ to the category of pointwise algebraic $\cC$-modules. The module $M$ constructed above is simply $S(A/\fa_1)$, where $\fa_1$ is the first determinantal ideal. It would be interesting to more closely study the connection between $\Mod_A$ and the category of pointwise algebraic $\cC$-modules. For instance, is the former the Serre quotient of the latter by a category of torsion modules, with $S$ being the section functor?
\end{remark}

\section{Other cases} \label{s:other}

\subsection{Symplectic groups}

The theory in \S \ref{s:orth} applies almost without modification to the symplectic case. We define $\cC_{\Sp}$ to be the category of finite dimensional vector spaces equipped with non-degenerate skew-symmetric forms. We let $\cA_{\Sp}$ be the category of algebraic $\cC_{\Sp}$-modules, using an analogous definition of algebraic. The related category of algebraic representations of the infinite symplectic group is studied in \cite[\S 4]{infrank}. The obvious analogs of Theorems~\ref{thm:ST}, \ref{thm:satfin}, \ref{thm:noeth}, and their corollaries hold for $\cA_{\Sp}$, with exactly analogous proofs. The material in \S\S \ref{ss:loccoh}, \ref{ss:K} carries over as well. We mention one slight difference: the description of the Grothendieck group is now
\begin{displaymath}
\rK(\cA_\Sp) = \rK(\Rep(\Sp)) \oplus \bigoplus_{n \ge 0} \rK(\Rep(\Sp_{2n})).
\end{displaymath}
The group $\rK(\Rep(\Sp_{2n}))$ is a polynomial ring in $n$ variables on the classes of exterior powers of the standard representation.

The category $\cC_{\Sp}$ appears in \cite[\S 4.4]{infrank}, as $T_1'$. It also appeared in \cite{putman-sam} and \cite{patzt} under the notation ${\bf SI}(\bC)$. See \cite{patzt} for some applications. We note that \cite[Theorem~D]{patzt} proves the category of pointwise algebraic $\cC_{\Sp}$-modules is locally noetherian, which encompasses our Theorem~\ref{thm:noeth}.

\subsection{General linear groups} \label{ss:gl}

The theory in \S \ref{s:orth} also carries over to the case of algebraic representations of the general linear group, with very minor modifications. Let $\cC_{\GL}$ be the following category. Objects are triples $(V,W,\omega)$ where $V$ and $W$ are finite dimensional and $\omega \colon V \otimes W \to \bC$ is a perfect pairing. A morphism $(V,W,\omega) \to (V',W',\omega')$ is a pair of linear maps $V \to V'$ and $W \to W'$ that respects the pairing. We let $T^{n,m}$ be the $\cC_{\GL}$-module taking $(V,W)$ to $V^{\otimes n} \otimes W^{\otimes m}$. We say that a $\cC_{\GL}$-module is algebraic if it is a subquotient of a direct sum of $T^{n,m}$'s, and let $\cA_{\GL}$ be the category of algebraic representations. We note that the analog of the $\cC_{\bO}$-module $\bS_{\lambda}$ in the present setting is the $\cC_{\GL}$-module given by $(V,W) \mapsto \bS_{\lambda}(V) \otimes \bS_{\mu}(W)$.

The category $\cC_{\GL}$ has an alternate description that is worth pointing out. Let $\cC'_{\GL}$ be the following category. Objects are finite dimensional complex vector spaces. A morphism $V \to W$ is a pair $(i,p)$ where $i \colon V \to W$ and $p \colon W \to V$ are linear maps such that $pi=\id_V$. The functor $\cC'_{\GL} \to \cC_{\GL}$ taking $V$ to $(V,V^*,\omega)$, with $\omega$ the canonical pairing, is an equivalence. The $\cC_{\GL}$-module $T^{n,m}$ corresponds to the $\cC'_{\GL}$-module given by $V \mapsto V^{\otimes n} \otimes (V^*)^{\otimes m}$. Thus $\cC'_{\GL}$-modules are like Schur functors but allow one to use duals. We note that the related theory of algebraic representations of the infinite general linear group is studied in \cite[\S 3]{infrank}.

Theorem~\ref{thm:ST} applies to $\cA_{\GL}$ with obvious changes (e.g., replace $T^n$ with $T^{n,m}$). Theorems~\ref{thm:satfin} and~\ref{thm:noeth} apply without change. The material in \S \ref{ss:loccoh} applies without change as well. Theorem~\ref{thm:Ktheory} applies with the obvious modifications: we obtain
\begin{displaymath}
\rK(\cA_\GL) = \rK(\Rep(\GL)) \oplus \bigoplus_{n \ge 0} \rK(\Rep(\GL_n)).
\end{displaymath}
The Grothendieck group of $\Rep(\GL)$ can naturally be identified with $\Lambda \otimes \Lambda$, where $\Lambda$ is the ring of symmetric functions, see \cite[\S 2]{koike}. Furthermore, the Grothendieck group of $\GL_n$ is isomorphic to $\bZ[z_1,\dots,z_{n-1}, z_n, z_n^{-1}]$ a polynomial ring in $n$ variables with the last variable inverted, where $z_i$ is the class of the exterior power representation $\bigwedge^i \bC^n$. Finally, the material on Hilbert functions holds in the present setting: the Hilbert function is defined to be the function $n \mapsto \dim M(\bC^n, \bC^n)$.

The category $\cC_{\GL}$ appears in \cite[(3.4.6)]{infrank} as $T_1'$. The category $\cC'_{\GL}$ appears there as well, without name. The category $\cC'_{\GL}$ also appears in \cite{putman-sam} and \cite{patzt} as $\VIC$ (or $\VIC(\bC,\bC^{\times})$). We note that \cite[Theorem~C]{patzt} proves that the category of pointwise algebraic $\cC_{\GL}$-modules is locally noetherian, which encompasses our Theorem~\ref{thm:noeth}.

\subsection{Symmetric groups}

There is also a symmetric group analog of the theory in this paper. Let $\cC_{\fS}$ be the category whose objects are tuples $(A,m,\Delta,\eta)$ where $A$ is a finite dimensional vector space, $m \colon A \otimes A \to A$ is an associative commutative multiplication on $A$ (unit not required), $\Delta \colon A \to A \otimes A$ is a coassociative cocommutative coalgebra structure on $A$ with counit $\eta$ such that $m\Delta=\id$ and $\Delta m=(m \otimes 1)(1 \otimes \Delta)$. The motivation for considering this category comes from considering the partition algebra (or really, a categorical form of it) as in \cite[\S 6]{infrank}. We show in \cite[(6.4.7)]{infrank} that $\cC_{\fS}$ is in fact equivalent to the category $\FI$ of finite sets and injections. Let $\cA_{\fS}$ be the category of $\cC_{\fS}$-modules (or equivalently, the category of $\FI$-modules). There is no algebraicity requirement now. We let $T^n$ be the $\cC_{\fS}$-module freely generated in degree $n$. This has an action of the symmetric group $S_n$, and its $\lambda$-isotypic piece is the analog of $\bS_{\lambda}$.

Let $\Rep(\fS)$ be the category of algebraic representations of the infinite symmetric group, as defined in \cite[\S 6]{infrank}. The analogs of the material in \S \ref{ss:repO}, including the theory of specialization, is developed in \cite[\S 6]{infrank} (which relies on some results from \cite{symc1}). The obvious analogs of Theorems~\ref{thm:ST}, \ref{thm:satfin}, \ref{thm:noeth}, and their corollaries hold for $\cA_{\fS}$. The material in \S\S \ref{ss:loccoh}, \ref{ss:K} carries over as well. We note that the result on the Grothendieck group can now be stated as $\rK(\cA_{\fS})=\Lambda \oplus \Lambda$ as $\rK(\Rep(\fS))$ and $\bigoplus_{n \ge 0} \rK(\Rep(\fS_n))$ are both isomorphic to $\Lambda$.

The category of $\FI$-modules was introduced in \cite{fimodule}, which proved some of the above results. It was studied in greater detail (and from a different perspective) in \cite{symc1}, where all of the above results were proved. The follow-up paper \cite{symu1} generalizes many of these results to $\FI_d$-modules.

\end{document}